\definecolor{wongorange}{RGB}{230,159,0}
\definecolor{wonglightblue}{RGB}{86,180,233}
\definecolor{wonggreen}{RGB}{0,158,115}
\definecolor{wongyellow}{RGB}{240,228,66}
\definecolor{wongdarkblue}{RGB}{0,114,178}
\definecolor{wongred}{RGB}{213,94,0}
\definecolor{wongpink}{RGB}{204,121,167}
\definecolor{scottygreenyellow}{RGB}{169,255,23}
\definecolor{vantablack}{RGB}{26,26,26}
\newcommand{\stnote}[1]{}%{\todo[color=cyan!20!black,linecolor=cyan!20!black,size=\tiny,textcolor = white]{#1}}
\newcommand{\by}{\times}
\newcommand{\oby}{\otimes}
\newcommand{\sets}[1]{\left\{ #1 \right\}}
\newcommand{\abs}[1]{\left| #1 \right|}
\newcommand{\aabs}[1]{\left\| #1 \right\|}
\newcommand{\parenths}[1]{\left( #1 \right)}
\newcommand{\anglebracket}[1]{\left\langle #1 \right\rangle}
\newcommand{\ol}[1]{\overline{#1}}
\newcommand{\ul}[1]{\underline{#1}}
\newcommand{\inv}{^{-1}}
\renewcommand{\phi}{\varphi}
\newcommand{\xr}[1]{\xrightarrow{#1}}
\newcommand{\into}{\hookrightarrow}
\newcommand{\downto}{\downarrow}
\newcommand{\ds}{\displaystyle}
\newcommand{\eps}{\varepsilon}
\newcommand{\dee}{\partial}
\renewcommand{\tilde}[1]{\widetilde{#1}}
\renewcommand{\hat}[1]{\widehat{#1}}
\newcommand{\acts}{\curvearrowright}
\renewcommand{\implies}{\Rightarrow}
\newcommand{\nond}{\noindent}
\newcommand{\Fr}{\mathsf{Fr}}
\newcommand{\mCP}{\mathbb{CP}}
\newcommand{\mRP}{\mathbb{RP}}
\newcommand{\mHP}{\mathbb{HP}}
\newcommand{\SO}{\mathsf{SO}}
\newcommand{\Spin}{\mathsf{Spin}}
\newcommand{\Mat}{\mathsf{Mat}}
\newcommand{\lU}{\mathsf{U}}
\newcommand{\Pin}{\mathsf{Pin}}
\newcommand{\mCl}{\mathbb{C}\text{l}}
\newcommand{\Sq}{\text{Sq}}
\newcommand{\Sp}{\mathsf{Sp}}
\newcommand{\uno}{\text{id}}
\newcommand{\Diff}{\text{Diff}}
\newcommand{\Homeo}{\text{Homeo}}
\newcommand{\Emb}{\text{Emb}}
\newcommand{\K}{\text{K}}
\newcommand{\mtR}{\widetilde{\mR}}
\newcommand{\mtC}{\widetilde{\mathbb{C}}}
\newcommand{\mtCP}{\widetilde{\mathbb{CP}}}
\newcommand{\SU}{\mathsf{SU}}
\DeclareMathOperator{\proj}{\text{proj}}
\DeclareMathOperator*{\colim}{\mathop{\text{colim}}} % * makes it behave like \sum and \mathop makes it change depending on if you're inline or not. 
\DeclareMathOperator{\Th}{\text{Th}}
\DeclareMathOperator{\End}{\text{End}}
\DeclareMathOperator{\Hom}{\text{Hom}}
\DeclareMathOperator{\FBF}{\text{FBF}}
\DeclareMathOperator{\BF}{\text{BF}}
\DeclareMathOperator{\SW}{\text{SW}}
\DeclareMathOperator{\SWhat}{\hat{\SW}}
\DeclareMathOperator{\SWapp}{\SW_{\approx}}
\DeclareMathOperator{\SWappsph}{\SW_{\approx}^{+}}
\DeclareMathOperator{\FSWhat}{\hat{\text{FSW}}}
\DeclareMathOperator{\FSWhatappthom}{\hat{\text{FSW}}_{\approx}^{\text{Th}}}
\DeclareMathOperator{\ctheta}{c_{\theta}}
\DeclareMathOperator{\cthetabar}{c_{\ol{\theta}}}
\DeclareMathOperator{\Res}{\text{Res}}
\newcommand{\sectionlabel}[1]{\section{#1}\label{sec:#1}}
\newcommand{\subsectionlabel}[1]{\subsection{#1}\label{subsec:#1}}
\newcommand{\subsubsectionlabel}[1]{\subsubsection{#1}\label{subsubsec:#1}}
\newcommand{\Hmatrix}{\begin{psmallmatrix}0&1\\1&0\end{psmallmatrix}}
\newcommand{\Pintwo}{{\Pin(2)}}
\newcommand{\lineyspace}{
  \bigskip
  \hrule
  \bigskip
}
\newcommand{\dirac}{\partial\!\!\!\!/}
\newcommand*{\relrelbarsep}{.386ex}
\newcommand*{\relrelbar}{%
  \mathrel{%
    \mathpalette\@relrelbar\relrelbarsep
  }%
}
\newcommand*{\@relrelbar}[2]{%
  \raise#2\hbox to 0pt{$\m@th#1\relbar$\hss}%
  \lower#2\hbox{$\m@th#1\relbar$}%
}
\providecommand*{\rightrightarrowsfill@}{%
  \arrowfill@\relrelbar\relrelbar\rightrightarrows
}
\providecommand*{\leftleftarrowsfill@}{%
  \arrowfill@\leftleftarrows\relrelbar\relrelbar
}
\providecommand*{\xrightrightarrows}[2][]{%
  \ext@arrow 0359\rightrightarrowsfill@{#1}{#2}%
}
\providecommand*{\xleftleftarrows}[2][]{%
  \ext@arrow 3095\leftleftarrowsfill@{#1}{#2}%
}
  \def\command@factory#1{%
    \expandafter\def\csname cal#1\endcsname{\mathcal{#1}}
  }
  \def\command@factory#1{%
    \expandafter\def\csname m#1\endcsname{\mathbb{#1}}
  }
  \def\command@factory#1{%
    \expandafter\def\csname fr#1\endcsname{\mathfrak{#1}}
  }
  \def\command@factory#1{%
    \expandafter\def\csname bf#1\endcsname{\mathbf{#1}}
  }
  \def\command@factory#1{%
    \expandafter\def\csname v#1\endcsname{\vec{#1}}
  }
  \def\command@factory#1{%
    \expandafter\def\csname sf#1\endcsname{\mathsf{#1}}
  }
  \def\command@factory#1{%
    \expandafter\def\csname x#1\endcsname{\text{#1}}
  }
\newtheoremstyle{boldtitle}      % <name>
  {3pt}                          % Space above
  {3pt}                          % Space below
  {\normalfont}                  % Body font (non-italic)
  {}                             % Indent amount
  {\bfseries}                    % Theorem head font (bold)
  {.}                            % Punctuation after theorem head
  {.5em}                         % Space after theorem head
  {} 
\theoremstyle{plain}
\newtheorem{theorem}{Theorem}[section]
\newtheorem*{theorem*}{Theorem}
\newtheorem{corollary}[theorem]{Corollary}
\newtheorem*{corollary*}{Corollary}
\newtheorem{lemma}[theorem]{Lemma}
\newtheorem{proposition}[theorem]{Proposition}
\theoremstyle{definition}
\newtheorem{example}[theorem]{Example}
\theoremstyle{boldtitle}
\newtheorem{remark}[theorem]{Remark}
\title{The Boundary Dehn Twist on a Punctured Connected Sum of Two K3 Surfaces is Nontrivial in the Smooth Mapping Class Group}
\author{Scotty Tilton}
\definecolor{gold}{HTML}{ffd700}
\begin{document}
\maketitle
\begin{abstract}
We prove that the boundary Dehn twist on $\K3\#\K3\setminus B^4$ is nontrivial in the smooth mapping class group, providing another example of an exotic diffeomorphism on a simply-connected spin four-manifold. 
We do so by finding an algebraic criterion which must be satisfied if the two maps are smoothly isotopic. The main tools involved are the $\Pintwo$-equivariant families Bauer-Furuta invariant, equivariant topological $\sfK$-theory, and the Atiyah-Hirzebruch spectral sequence to show this algebraic criterion cannot be satisfied, and this establishes the result. As a corollary, we find any smooth bundle $\K3\#\K3\into E\downto S^2$ has $w_2(T^vE)=0$, so $E$ is spin.
\end{abstract}

% \tableofcontents

% \newpage
\sectionlabel{Introduction}

Given a (smooth) manifold $X$, one can try to understand its (smooth) mapping class group. The continuous mapping class group is $$\text{MCG}^{\text{top}}(X):=\pi_0(\Homeo(X),\uno)\text{ or }\text{MCG}^{\text{top}}_\dee(X,\dee X):=\pi_0(\Homeo_\dee(X,\dee X),\uno)$$ and smoothly it is $$\text{MCG}^{\text{smth}}(X):=\pi_0(\Diff(X),\uno)\text{ or }\text{MCG}^{\text{smth}}_\dee(X,\dee X):=\pi_0(\Diff_\dee(X,\dee X),\uno).$$ Here, $\Homeo_\dee$ and $\Diff_\dee$ denote those homeomorphisms and diffeomorphisms which are the identity in a neighborhood of the boundary. The topology on $\Homeo(X)$ is the compact open topology, and the topology of $\Diff(X)$ is the $C^\infty$ topology. 

The relationships between different mapping class groups can change as the dimension of the manifold changes. For example, in dimensions 1-3, the homeomorphism, diffeomorphism group, and the piecewise linear group are equivalent. This begins to change in dimension 4, with some topological manifolds, such as Freedman's $E_8$-manifold, not admitting a smooth structure at all. The Kirby-Siebenman invariant is an obstruction to lifting a topological structure to a PL structure in dimension $\geq 5$. % See Kirby-Siebenman's book "Foundational Essays" in Essay IV Section 10 for a lot of enlightening pieces. 
An overarching goal is to understand these groups and their relationships between one another. Our focus in this paper is on understanding a specific difference between the continuous and smooth mapping class groups of the simply-connected spin four-manifold $\K3\#\K3\setminus B^4$.

A key ingredient is Quinn's theorem and some analogues which provide a relatively easy criterion to check whether a homeomorphism is continuously isotopic to the identity. Let $X$ be a closed, simply-connected four-manifold. The theorem states: If $\phi:X\to X$ is a homeomorphism which induces the identity on homology, then $\phi$ is in the identity component of $\Homeo(X)$ \cite{QuinnIsotopy86}. Orson-Powell \cite[Thm. E]{orson2025mapping} and Krannich-Kupers \cite[Lem. 2]{krannich2024torelligroupsdehntwists} generalized this result to simply-connected manifolds with boundary. The boundary Dehn twist $\delta$, described in Section \ref{subsec:Dehn Twists}, is such a homeomorphism.  

Now, if some diffeomorphism $h$ is topologically isotopic to the identity, can we determine whether it is smoothly isotopic to the identity? If it is not, we call the diffeomorphism $h$ an \textbf{exotic diffeomorphism}. Another way to express those exotic diffeomorphisms is to determine whether $[h]$ is a nontrivial element in $$\ker\bigg(\text{MCG}^{\text{smth}}(X)\to \text{MCG}^{\text{top}}(X)\bigg).$$

The first known example of an exotic diffeomorphism was found by Ruberman in \cite{ruberman1998obstruction} using Donaldson-type invariants. Since then, several authors have discovered other exotic diffeomorphisms, such as the boundary Dehn twist on $\K3\setminus B^4$ by \cite{Baraglia_2022,KM20}, the boundary Dehn twist on $S^2\by S^2\#\K3\setminus B^4$ \cite{JLin20}, and the boundary Dehn twist on some irreducible four-manifolds in \cite{baraglia2024irreducible4manifoldsadmitexotic}. Recently, some authors found exotic diffeomorphisms which survived multiple stabilizations in \cite{kang2025exoticdiffeomorphismscontractible4manifold}. Other authors studying phenomena in this realm include, but are not limited to, \cite{kang2024exoticdehntwistshomotopy}, \cite{konno2024monodromydiffeomorphismweightedsingularities}, and \cite{konno2024exoticdehntwists4manifolds}. 

One can generalize the boundary Dehn twist, and these generalizations are the study of many other authors. One common theme among these results and research directions is using invariants derived from the Seiberg-Witten map, namely the (equivariant) Bauer-Furuta invariant and families versions of it. 

\begin{remark}
  Something to note is that for non-spin four-manifolds, the boundary Dehn twist is trivial, i.e., not exotic \cite[Cor. A.5]{orson2025mapping}. This is why we focus on spin manifolds, since these manifolds \emph{could} have exotic boundary Dehn twists. 
\end{remark}

\subsectionlabel{Main Theorem and Outline of Proof}
Our main tool is the families version of the $\Pintwo$-equivariant Bauer-Furuta invariant which requires tools from equivariant stable homotopy theory. This paper records the following result in smooth topology.   

\begin{theorem}\label{thm:main-thm}
    The boundary Dehn twist is an exotic diffeomorphism of $\K3\#\K3\setminus B^4$. 
\end{theorem}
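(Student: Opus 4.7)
The plan is to assume for contradiction that $\delta$ is smoothly isotopic to the identity relative to the boundary. Since $\delta$ is supported in a collar of $\partial (\K3\#\K3\setminus B^4)=S^3$ and is identity outside this collar, an isotopy from $\delta$ to $\uno$ through $\Diff_\dee(\K3\#\K3\setminus B^4,\dee)$ can be closed up to a loop, since $\delta$ extends by the identity across a capped-off $B^4$ to give the identity on $\K3\#\K3$. Clutching along this loop produces a smooth bundle $\K3\#\K3\into E\downto S^2$, which inherits a preferred trivialization along a disk neighborhood of the north pole.

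Next, I would apply the $\Pintwo$-equivariant families Bauer-Furuta invariant to $E\to S^2$. For each fiber, the spin structure on $\K3\#\K3$ yields the usual $\Pintwo$-action on the Seiberg-Witten configuration space; taking the fiberwise Bauer-Furuta finite-dimensional approximation over $S^2$ produces a $\Pintwo$-equivariant stable map between parametrized Thom spectra over $S^2_+$. The fact that $E$ arises from an isotopy of $\delta$ (as opposed to an arbitrary family) should force this families invariant to factor through the restriction to a point, i.e., to equal the $\Pintwo$-equivariant Bauer-Furuta invariant of $\K3\#\K3$ smashed with $\uno_{S^2_+}$, up to a correction term lying in a computable subgroup. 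The precise algebraic statement of this factorization is the criterion referenced in the abstract: some distinguished class in a $\Pintwo$-equivariant stable homotopy group over $S^2$ must vanish (or land in the image of a transfer-type map).

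To detect the obstruction, I would pass to $\Pintwo$-equivariant $\sfK$-theory by applying the appropriate equivariant Chern character / $\sfK_{\Pintwo}$-valued invariant. On $\K3$, the Bauer-Furuta invariant is known to detect Manolescu's $\kappa$; for $\K3\#\K3$ the gluing formula produces a smash product of two copies, and the resulting $\Pintwo$-equivariant $\sfK$-theory class is nonzero and has a specific degree in the $RO(\Pintwo)$-grading. Running the Atiyah-Hirzebruch spectral sequence for $\sfK_{\Pintwo}^*(S^2_+\wedge \Th)$, with $E_2$-page computed from Bredon cohomology with coefficients in $R(\Pintwo)$-modules, one identifies the relevant differentials and edge homomorphisms and reads off the families invariant. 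The criterion derived in the previous step demands this class admit a specific factorization through a class supported at a point; a direct check in the AHSS shows this is impossible because of the way the $\Pintwo$-representation structure of $H^*(\K3\#\K3)$ interacts with the filtration.

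The main obstacle I expect is the AHSS computation in $\Pintwo$-equivariant $\sfK$-theory: one has to pin down the differentials (or the lack thereof) on the generator detecting the $\K3\#\K3$ families invariant, and show that the algebraic criterion forces a class in a specific filtration degree to be divisible in a way that is inconsistent with this generator. Secondary difficulties are (i) proving the gluing/product formula for the families $\Pintwo$-equivariant Bauer-Furuta invariant for connected sums in a way that tracks the $\Pintwo$-representation grading precisely, and (ii) verifying that the isotopy-induced bundle actually forces the claimed factorization, rather than a weaker one that could be satisfied. Once these are in place, the contradiction yields Theorem \ref{thm:main-thm}.
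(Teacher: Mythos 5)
Your broad skeleton—contradiction, an $S^2$-bundle from a hypothetical isotopy, a families Bauer--Furuta invariant, a $\sfK$-theoretic obstruction detected by a spectral sequence—matches the outline of the paper's argument. But several key steps are left vague or differ from the paper in ways that represent genuine gaps rather than alternatives.

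First, the paper never directly clutches a loop in $\Diff(\K3\#\K3)$ to build the $S^2$-bundle. Instead it works with mapping tori $T_\uno,T_\delta$ over $S^1$, which each admit \emph{two} spin structures on the vertical tangent bundle ($\tilde\frs$ and $\tilde\frs^\tau$). The essential geometric step (Proposition \ref{prop:n-bundle-prop}) is a map $\phi\colon T_\uno\to T_\delta$ sending $\tilde\frs\mapsto\tilde{\frs'}^\tau$ and $\tilde\frs^\tau\mapsto\tilde{\frs'}$; combined with the isotopy hypothesis this forces $\FBF^{\Pintwo}(T_\uno,\tilde\frs)=\FBF^{\Pintwo}(T_\uno,\tilde\frs^\tau)$. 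The $S^2$-bundle (more precisely, the bundle map $\calF\BF_{S^2,V}\colon E_{2\mH+V}\to E_{6\mtR+V}$ over $S^2$) is then clutched from the explicit homotopy $H_V$ between these two equal classes, together with the conjugation maps $c_\theta$ that realize $\FBF(\tilde\frs^\tau)$ as $c_0\circ{}_0\SW\circ\ol c_0$ (Lemma \ref{lem:prod-twisted-fbf-calc}). Your ``close up to a loop'' construction skips this comparison of spin structures and does not identify where the twist is recorded in the bundle; without that, you have no handle on why the resulting family is nontrivially twisted, and your ``algebraic criterion'' remains a placeholder rather than the concrete identity $p_+^\ast\,a_{\mtC}^3\,i_-^\ast\equiv 0$ that drops out of the commutative diagram of cofiber sequences (\ref{eqn:main-diagram-cofibers-spaces}) and (\ref{eqn:main-k-diagram}).

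Second, your $\sfK$-theoretic step is not the one the paper uses, and the substitution is not cosmetic. You propose an AHSS for $\sfK_{\Pintwo}^\ast(S^2_+\wedge\Th)$ with Bredon coefficients in $R(\Pintwo)$-modules and invoke Manolescu's $\kappa$ and a gluing formula for the families invariant; none of these appear in the proof. The paper quotients the $S^1$-action first, passing to $C_2$-equivariant $\sfK$-theory, and then identifies $\sfK_{C_2}$-groups with ordinary $\sfK$-theory of concrete spaces: $\sfK_{C_2}(\mtCP^3)\cong\sfK(B\Pintwo^{(6)})\cong\mZ^2\oplus\mZ/4$ (Lemma \ref{lem:K theory of BPin(2)^4n-2}), and, crucially, $\sfK(E_{B\Pintwo^{(6)}})\cong\mZ^4\oplus\mZ/8\oplus\mZ/2$ (Lemma \ref{lem:k-torsion-total-space}), computed via an AHSS for the $\mRP^2$-fibration over $S^2\by\mHP^1$ with extensions read off from the factor bundles $F_{S^2}\simeq S^2\vee\mRP^4$ and $B\Pintwo^{(6)}$. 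The contradiction is that $p_+^\ast a_{\mtC}^3 = 4\,p^\ast w$ should vanish but does not, because $p^\ast w$ has order $8$. You cannot reach this without the explicit torsion calculation. Finally, the Desuspension Lemma \ref{lem:desuspension} is an essential technical step you omit: the families invariant lives a priori in a stable group after smashing with large $V$, and one needs an equivariant-obstruction-theory argument to reduce to $V=0$ before the finite $\sfK$-theory calculation makes sense.
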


This is proved by contradiction. Using the generalized theorem of Quinn, we know that $\delta,\uno$ are topologically isotopic. Suppose there is in fact a smooth isotopy between the identity and the boundary Dehn twist. If this were true, we show the families Bauer-Furuta invariant for the product family with the product spin structure is equal to the invariant for the product family with the twisted spin structure. With this hypothetical fact, we are able to % build a nontrivial bundle of $\K3\#\K3$s over $S^2$ and 
define a Seiberg-Witten-like map between Hilbert bundles over $S^2$ by using the stable homotopy between the family Bauer-Furuta invariants of the two spin families over the product spin structure. We then observe cofiber sequences of the spaces involved and our hypothetical Bauer-Furuta-type map, and we apply $C_2$-equivariant $\sfK$-theory. This enforces an algebraic requirement which would be satisfied by such a bundle and bundle maps. We then show such an algebraic condition cannot be satisfied by explicitly calculating the $\sfK_{C_2}$-groups of the spaces involved. Therefore, such a bundle map cannot exist, and hence our hypothetical isotopy cannot exist, proving the Dehn twist is exotic. A corollary of this result is the following.

\begin{corollary}\label{cor:main-cor}
  A smooth fiber bundle $\K3\#\K3\into E\downto S^2$ must have $w_2(T^vE)=0$. Equivalently, for any smooth fiber bundle $X\into E\downto S^2$, the total space $E$ is spin. 
\end{corollary}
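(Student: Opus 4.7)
The plan is to deduce Corollary \ref{cor:main-cor} directly from Theorem \ref{thm:main-thm} via the long exact sequence of a standard fibration of diffeomorphism groups. Write $X = \K3\#\K3$ and fix a point $x \in X$. Evaluating the derivative at $x$ gives the fibration
$$\Diff_\dee(X \setminus B^4) \to \Diff(X, x) \xrightarrow{D|_x} \SO(4),$$
where the fiber is identified via Cerf--Palais: diffeomorphisms fixing the $1$-jet at $x$ deformation retract onto those fixing a small ball neighborhood of $x$. The relevant portion of its long exact sequence is
$$\pi_1(\Diff(X, x)) \to \pi_1(\SO(4)) = \mathbb{Z}/2 \xrightarrow{\dee} \pi_0(\Diff_\dee(X \setminus B^4)),$$
and $\dee$ sends the generator of $\pi_1(\SO(4))$ to the class $[\delta]$ of the boundary Dehn twist: a representative loop $\ell: S^1 \to \SO(4)$ lifts to a path $t \mapsto \phi_t \in \Diff(X, x)$ rotating a collar of $x$ by $\ell(t)$, and $\phi_1$ is precisely $\delta$ (this is essentially the definition of $\delta$ from Section \ref{subsec:Dehn Twists}). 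Theorem \ref{thm:main-thm} gives $[\delta] \neq 0$, so $\dee$ is injective and the image of $\pi_1(\Diff(X, x)) \to \pi_1(\SO(4))$ is trivial.

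Now let $\pi: E \to S^2$ be an arbitrary smooth $X$-bundle, classified by some $[\gamma] \in \pi_1(\Diff(X))$. Since $\pi_1(X) = 0$, the evaluation fibration $\Diff(X, x) \to \Diff(X) \to X$ lets us lift $[\gamma]$ to $[\tilde{\gamma}] \in \pi_1(\Diff(X, x))$; equivalently, $E$ admits a section $\sigma: S^2 \to E$ through $x$. The image of $[\tilde{\gamma}]$ under $(D|_x)_*: \pi_1(\Diff(X, x)) \to \pi_1(\SO(4))$ is the clutching function of the rank-$4$ oriented vector bundle $\sigma^* T^v E \to S^2$, so under the standard identification $\pi_1(\SO(4)) \cong H^2(S^2; \mathbb{Z}/2) = \mathbb{Z}/2$ via $w_2$ it equals $\sigma^* w_2(T^v E)$. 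By the previous paragraph, this class vanishes.

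It remains to upgrade the vanishing of $\sigma^* w_2(T^v E)$ to the vanishing of $w_2(T^v E)$. The Serre spectral sequence for $X \to E \to S^2$ with $\mathbb{Z}/2$-coefficients has $E_2^{1,1} = H^1(S^2; H^1(X; \mathbb{Z}/2)) = 0$ since $H^1(X; \mathbb{Z}/2) = 0$, producing the short exact sequence
$$0 \to H^2(S^2; \mathbb{Z}/2) \xrightarrow{\pi^*} H^2(E; \mathbb{Z}/2) \xrightarrow{i^*} H^2(X; \mathbb{Z}/2) \to 0.$$
Since $i^* w_2(T^v E) = w_2(TX) = 0$, the class $w_2(T^v E)$ lies in $\pi^* H^2(S^2; \mathbb{Z}/2)$; combined with $\sigma^* \pi^* = \uno$, this forces $w_2(T^v E) = 0$. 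The equivalent formulation ``$E$ is spin'' then follows from $w_2(TE) = w_2(T^v E) + \pi^* w_2(TS^2)$ together with $w_2(TS^2) = 0$. Beyond Theorem \ref{thm:main-thm} itself there is no significant obstacle; the one place requiring attention is the identification of $\dee$ with $[\delta]$, which is essentially tautological from how the boundary Dehn twist is constructed.
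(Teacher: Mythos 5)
Your proposal is correct and is essentially the argument the paper defers to: the paper's proof simply cites \cite[Prop.\ 2.1]{KM20} with $\K3$ replaced by $\K3\#\K3$, and that reference proceeds by exactly the fibration/long-exact-sequence argument you spell out (compare also the paper's Remark \ref{rmk:dehn-twist-in-image-of-LES}, which records the same fibration but phrased via $\Diff(X)\to\Fr(X)$ rather than factored through $\Diff(X,x)\to\SO(4)$). Your write-up is a clean and correct expansion of what the paper leaves implicit, and all the steps — identifying $\dee(1)=[\delta]$, using $\pi_1(X)=0$ to lift the clutching loop, reading off $\sigma^*w_2(T^vE)$ from $\pi_1(\SO(4))$, and the Serre spectral sequence step to remove the $\sigma^*$ — check out.
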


\subsectionlabel{Future Directions}
Since there has been study of $\K3$ in \cite{Baraglia_2022,KM20}, $\K3\# S^2\by S^2$ in \cite{JLin20}, and now $\K3\#\K3$ in this paper, the next natural simply-connected, spin four-manifold to study is the elliptic surface $E(4)$, diffeomorphic to $\K3\#\K3\# S^2\by S^2$ by the properties laid out by \cite[Prop. 3.1.11]{Gompf-Stip-Kirby-Book} combined with the classification of simply-connected four-manifolds by \cite{Freedman82}. A construction of the elliptic manifolds can be found in \cite[\S3.1,\S7.3]{Gompf-Stip-Kirby-Book}. Note that $\K3$ is diffeomorphic to $E(2)$, and the even elliptic surfaces $E(2n)$ are all spin, where the odd elliptic surfaces are not. 

The main theorem of this paper and former results leads us to ask: is the Dehn twist isotopic to the identity after a single stabilization on a punctured $\K3\#\K3?$ Is it exotic on other even elliptic surfaces? Lin's result was the first instance of an exotic diffeomorphism surviving a single stabilization, and the recent result of Kang-Park-Taniguchi \cite{kang2025exoticdiffeomorphismscontractible4manifold} showed that an exotic diffeomorphism can survive two stabilizations. 

The key difference between $E(4)$ and these other examples is in their intersection form. If we choose to follow the $\sfK$-theoretic route, there is a good indication that we will have to work with $\sfK\sfO$-theory in some cases. A table is provided with certain spin four-manifolds, their intersection forms, the $\Pintwo$-equivariant stable homotopy group corresponding to the Bauer-Furuta invariant, and whether $\sfK$-theory or $\sfK\sfO$-theory would make more sense. This is because $S^{2n\mtR}\cong S^{n\mtC}$ is a complex representation sphere, whereas $S^{(2n+1)\mtR}$ is not.  In other words, the representations involved, when viewed as complex $\Pintwo$-representations, work well in $\sfK$-theory, whereas representations which must be witnessed as real $\Pintwo$-representations align themselves better with $\sfK\sfO$-theoretic methods.
\begin{center}
  \begin{tabular}{c|c|c|c}
    Manifold & Intersection Form & Homotopy Group & ``$\sfK$''-theory\\\hline
    $\K3$ & $2E_8\oplus 3\Hmatrix$ & $\pi^\Pintwo_{\mH}(S^{3\mtR})$ & $\sfK\sfO$\\\hline
    $\K3\# S^2\by S^2$ & $2E_8\oplus 4\Hmatrix$ & $\pi^\Pintwo_{\mH}(S^{4\mtR})$ & $\sfK$\\\hline
    $\K3\#\K3$ & $4E_8\oplus 6\Hmatrix$ & $\pi^\Pintwo_{2\mH}(S^{6\mtR})$ & $\sfK$\\\hline
    $\K3\#\K3 \# S^2\by S^2$ & $4E_8\oplus 7\Hmatrix$ & $\pi^\Pintwo_{2\mH}(S^{7\mtR})$ & $\sfK\sfO$\\
  \end{tabular}
\end{center}
An interesting avenue would be to apply the methods of this paper to some of the prior results find alternative methods of proof and perhaps a generalization.

More questions about these exotic phenomena include: 
\begin{itemize}
  \item How many stabilizations are enough to remove exotica? 
  \item What is the structure of $\pi_k(\Diff(X))$ for different $k$ and $X$? Some authors have been exploring this avenue, such as in \cite{Baraglia2023NontrivialK3}.
  \item What information can we determine about $\pi_k(\Diff(X))$? For $k=0$, the abelianization is congruent to $H_1(B\Diff(X))$, and it has been found that for some manifolds, this group is infinite rank such as in \cite{auckly2025familiesdiffeomorphismsembeddingspositive,Konno_2024}. 
\end{itemize}

\subsectionlabel{Outline of Paper} 
The paper is organized as follows. 

In \S2, we discuss the important background material and topological spaces involved in the proof. This section is more general than the later sections. Specifically, we discuss Dehn twists, the Seiberg-Witten map, the Bauer-Furuta invariant, the families Bauer-Furuta invariant, their equivariance, and finally, we cover some important constructions used in the paper and important actions of $\Pintwo$ on some basic spaces. 

In \S3, we bring in the spaces that are the focus of the paper, some facts about these spaces, and state the key hypothesis needed in the proof by contradiction. 

In , we explicitly describe the family Bauer-Furuta invariants for two mapping tori. We then construct a map between bundles over $S^2$, and break down the spaces involved into cofiber sequences and a commuting diagram. After applying $\sfK$-theory to the diagram, we inherit an algebraic criterion to check. This section ends with a desuspension result, allowing us to work with simpler spaces in our calculations. We proceed to calculate the $\sfK$-theory of key spaces involved in the commutative diagram.

In \S5 we prove the main result by finding a contradiction based on our initial faulty hypothesis, stated at the end of \S3. We also find a result on the second Stiefel-Whitney class of families of spin four-manifolds over $S^2$.

\subsectionlabel{Notation}
%   \begin{multicols}{2}
    \begin{itemize}
      \item $\mH = \mC\oplus \mC j$.
      \item $\mtR$ is $\mR$ with the $\Pintwo$-action generated by $e^{i\theta}\mapsto \uno$ and $j\mapsto -\uno$. 
      \item $\mtC$ is $\mC$ with the $\Pintwo$-action generated by $e^{i\theta}\mapsto \uno$ and $j\mapsto -\uno$. 
      \item $\mtCP^n$ denotes $\mCP^n$ with the left action of $C_2$ given by $$[\alpha_0:\beta_0:\cdots \alpha_n:\beta_n]\mapsto[-\ol{\beta}_0:\ol{\alpha}_0\cdots -\ol{\beta}_n:\ol{\alpha}_n].$$
      \item For $G$ a group and $Y$ a left (or right) $G$-space, we denote the quotient on the left (or right) by $_{G\backslash }Y$ (or $Y_{/G}$).
      \item We may abuse notation and write $\mtR$ and $\mtC$ as the signed $C_2$-representations, $\sigma_\mR$ and $\sigma_\mC$, as well. 
      \item We let $\sfR(G)$ and $\sfR\sfO(G)$ denote the complex and real representation rings, respectively, of $G$ representations. 
      \item If $V$ is a vector space, $S(V):=\sets{v\in V\mid \aabs{v}=1}.$ 
      \item $S^V$ denotes the one-point compactification of $V$.
      \item For a based space $Y$, we write $\Sigma^V Y := S^V\wedge Y.$
      \item For a vector space $V$, we use $V^{\oplus n}$ to denote iterated direct sum $n$ times and similar notation for $\otimes, \wedge, $ et cetera. 
      \item $nV:=V^{\oplus n}.$ 
      \item We denote the trivial vector bundle over a base $B$ with fiber $V$ as $\ul{V} :=V\by B$. 
      \item If $Y$ is a topological space which is not a vector space, $S(Y)$ denotes the unreduced suspension of $Y$. 
      \item $Y_+$ denotes $Y\sqcup \sets{\ast}$ as a based space with basepoint $\ast$. 
      \item When $Y$ is a CW-complex, $Y^{(k)}$ denotes the $k$-skeleton of $Y$.
      \item If $X$ is a simply-connected smooth four-manifold, $X^\circ:= X\setminus B^4$.
      \item $\delta_X^\circ$ denotes the boundary Dehn twist on $X^\circ$, unless the space $X^\circ$ is clear from context. 
      \item The Clifford algebra $\text{Cl}(n)$ is the quotient of the tensor algebra $T(\mR^n):= \bigoplus_{k=0}^\infty (\mR^n)^{\otimes k}$ by the two-sided ideal generated by $v\otimes v +\aabs{v}^2$ for $v\in \mR^n$. 
      \item $\mCl(n)$ denotes the complexified Clifford algebra $\text{Cl}(n)\otimes_\mR \mC$. 
      \item $\pi_n^s := \colim_k \pi_{n+k}(S^k)$, the $n$-th stable homotopy group of spheres. 
      \item Let $\calU$ be a complete Grothendieck universe of $G$-representations. We denote stable homotopy classes of maps between based $G$-spaces $Y,Z$ as $\sets{X,Y}^G :=\underset{\underset{\text{fin. dim.}}{V\subset \calU}}{\colim}[\Sigma^V Y,\Sigma^V Z]^G.$
      \item For a representation spheres $S^V$, we will write $\pi_V^G(Y) :=\sets{S^V,Y}^G$, the $G$-equivariant stable homotopy group of $Y$.
    \end{itemize}
%   \end{multicols}

\subsectionlabel{Acknowledgements}

I would like to acknowledge my coadvisors, Zhouli Xu (UCLA) and Jianfeng Lin (Tsinghua University), for their patience and assistance with the background theory and the opportunities for collaboration they introduced me to. I am indebted to Dan Isaksen, whose eCHT graduate student fellowship allowed me the time and resources to learn and research successfully. I am grateful to the University of California, San Diego for my time as a Ph.D. student in ``America's Finest City.'' I am extremely grateful to my friends, family, and partner, Martha, for their patience and support throughout the years. This project was partially supported by NSF Grant 2135884.

\sectionlabel{Background}

\subsectionlabel{Dehn Twists}
Given an oriented $n$-manifold $M$ and an embedded $S^{n-1}\into M$, we can define a Dehn twist as follows. Pick a loop $\gamma\in \pi_1(\SO(n),\uno)$ and trivialize a closed tubular neighborhood of the embedded $S^{n-1}$, $$\ol{D(\nu S^{n-1})}\cong [0,1]\by S^{n-1}.$$ %It's trivial because we take $e^\ast TM^n = TS^{n-1}\oplus L, an oriented line bundle on S^{n-1}. These are classified by maps S^{n-1}\to BSO(1), but SO(1) is a point, meaning so is BSO(1). Therefore all oriented line bundles on a circle are trivial.
If we apply the map 
\begin{center}
  \begin{tikzcd}[ampersand replacement = \&]
    \delta_\gamma\colon I\by S^{n-1}\ar[r]\& I\by S^{n-1}\\ [-20pt]
    (t,x)\ar[r,mapsto] \& (t,\gamma(\rho(t))\cdot x)
  \end{tikzcd}
\end{center} to $D(\nu(S^{n-1}))$ and the identity elsewhere on $M$, this is a general version of a boundary Dehn twist. Here $\rho(t):I\to I$ is a monotonically increasing smooth function which is 0 in a neighborhood of 0 and 1 in a neighborhood of 1. %The proof of Folland 8.18 gives a way to do this. 
% Let $\phi(x):=\begin{cases}0&\abs{x}\geq 3/8\\ \frac{8}{3\int_{-1}^1 e^{\frac{1}{x^2-1}}dx}e^{\frac{1}{(\frac{8}{3}x)^2-1}}&\abs{x}<3/8\end{cases}$. Then define $\rho(t): = \int_{\frac{1}{2}}^\frac{11}{8}\phi(t-y)dy$. Here is a \href{https://www.desmos.com/calculator/1pryzpehhl}{Desmos graph}.
This construction matches many of the usual definitions we have for Dehn twists. In this paper, we focus on dimension $n=4$.

\begin{remark}
  There are several other Dehn twists. An example comes from an embedding of $S^1$ into a manifold of codimension greater than one. Take a knot, $S^1\into S^3$ for example, and then, since the tubular neighborhood is $S^1\by D^2$, we use the $S^1$ coordinate to rotate the $D^2$ coordinate by an element of $\SO(2)$. 
  
  Another example is the Dehn-Seidel twist on $T^\ast S^2$, which has a $180^\circ$ twist on the zero section and is the identity far away from the zero section. 
  
  The last example we mention is a different sort which comes from manifolds with boundaries that have a natural $S^1$-action, such as a four-manifold whose boundary is a Milnor fibration.
\end{remark}\lineyspace

To be explicit, in this paper, the \textbf{Dehn twist along the boundary} of a four-manifold $(X,\dee X)$ with $\dee X\cong S^3$, is the diffeomorphism $\delta\colon X\to X$ which is the identity near the boundary and on the interior of the manifold, and in a collar neighborhood, $N$, is modeled on 
\begin{center}
  \begin{tikzcd}[ampersand replacement = \&]
    \delta'\colon S^3\by I\ar[r]\& S^3\by I \\ [-20pt]
    ((z,w),t)\ar[r,mapsto] \& ((e^{2\pi i \rho(t)}z, w), t)
  \end{tikzcd}
\end{center} where $\rho(t)\colon I\to I$ is a monotonically increasing smooth function which is 0 in a neighborhood of 0 and 1 in a neighborhood of 1. A cartoon schematic for the Dehn twist can be found in figure \ref{fig:dehn-cartoon}. 

\begin{figure}[h!]\label{fig:dehn-cartoon}
  \begin{center}
  \begin{tikzpicture}
    \def \r {1.5}
    \draw[gray, ball color = gray,shading =ball,opacity=.5](15:\r) arc (15:345:\r);
    \draw[thick,gray] (15:{2.2*\r}) arc (90:-90:.05 and {sin(15)*2.2*\r});
    \draw[thick,gray] (15:{1.7*\r}) arc (90:-90:.05 and {sin(15)*1.7*\r});
    \draw[thick,gray,dashed] (15:{2.2*\r}) arc (90:450:.05 and {sin(15)*2.2*\r});
    \draw[thick,gray,dashed] (15:{1.7*\r}) arc (90:450:.05 and {sin(15)*1.7*\r});
    \draw[gray,ball color = gray, shading =ball,opacity  = .5] (15:\r) -- (15:{2.2*\r}) arc (90:-90:.05 and {sin(15)*2.2*\r}) -- (-15:\r)  arc (195:165:\r);
    \draw[gray] (15:{2.2*\r}) -- (15:\r);
    \draw[gray] (-15:{2.2*\r}) -- (-15:\r);
    \draw[gray] (-15:\r) arc (195:165:\r);
    \draw[wongred,thick] (0:{.93*\r}) to[bend left] (15:{1.2*\r});
    \draw[wongred,dashed,thick] (15:{1.2*\r}) to[out = 315,in = 150] (-15:{1.5*\r});
    \draw[wongred,thick] (0:{1.68*\r}) to[bend left] (-15:{1.5*\r});
    \draw (0,0) node {$X$};
    % \draw (0:{1.9*\r}) node {$N'$};
    \draw (0:{2.35*\r}) node {$\dee X$};
    \draw[decorate,decoration= {brace,raise = 5pt},thick] (-15:{2.2*\r}) -- (-15:\r) node[midway,below = 6pt] {$N$};
    % \draw[decorate,decoration= {brace,raise = 5pt},thick] (15:{\r}) -- (15:{1.68*\r}) node[midway,above = 6pt] {$N\setminus N'$};
  \end{tikzpicture}
  \end{center}
  \caption{Cartoon for $(X,\dee X)$ with collar neighborhood $N$. The support of the Dehn twist contains a red line giving an indication of where the diffeomorphism is not the identity.}
\end{figure}
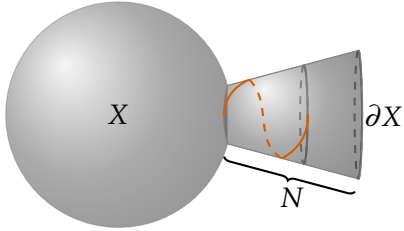

\begin{remark}\label{rmk:dehn-twist-in-image-of-LES}
One may ask why the boundary Dehn twist is showing up in $\text{MCG}_\dee^{\text{smth}}(X,\dee X)$. This is based on some homotopy calculations of embeddings of submanifolds into $X$. 

Let $\Sigma\into X$ be a smooth embedding of a submanifold. There is a fibration $\Diff(X)\to \Emb(\nu\Sigma, X)$ between diffeomorphisms on $X$ to embeddings of the normal bundle of $\Sigma$ into $X$ with homotopy fiber $\Diff_{\dee}(X\setminus \nu\Sigma)$. 

One example is when we take $\Sigma$ to be a point and we take $X$ to be a simply-connected, spin four-manifold. For any embedding of $\ast\into X$, we find $\Emb(\nu\ast,X)\simeq \Fr(X)$ and $\pi_1(\Fr(X),\theta) \cong \mZ/2$ for some chosen basepoint $\theta$ in the frame bundle of $X$.

From the homotopy fibration sequence, we obtain a long exact sequence in homotopy groups $$\cdots\to \pi_1(\Diff(X),\uno)\to \pi_1(\Fr(X),\theta)\to \pi_0(\Diff_\dee(X\setminus B^4),\uno)\to \cdots.$$ The image of the nontrivial loop $1\in \pi_1(\Fr(X),\theta)$ is the boundary Dehn twist on $X\setminus B^4$. This is where the boundary Dehn twist arises in the mapping class group. Its relationship to our problem can be read about in \cite[\S2]{KM20} and \cite{Baraglia_2022}. 
\end{remark}

\subsectionlabel{Seiberg-Witten to family Bauer-Furuta}
\subsubsectionlabel{Seiberg-Witten Map}
We collect several results and observations from Morgan's book \cite[Ch 2,3]{MorganSWBook}, though we would also like to point out the great exposition in Fushida-Hardy's course notes from Manolescu's four-manifold course at Stanford \cite[Ch 3,4]{shintaro-notes-manolescu-283A-4-manifolds}.

Let $(X,\frs)$ be a simply-connected spin four-manifold with spin$^c$ structure $\frs$ obtained from the spin structure on $X$ and a spin-invariant metric $g$. Since $\Spin^c(n) = \Spin(n)\by_{\sets{\pm 1}}\lU(1)$, % Lemma 2.6.1 of Morgan's Seiberg-Witten book.
we replace all the fibers of the spin structure with $\Spin^c(4)$ to get the spin$^c$ structure, $\frs$, on $X$. 

Let $S: = \frs\by_{\Spin(4)} \mC^4.$ It is worth noting that $\mCl(4)$ is isomorphic to $\Mat_{4\by 4}(\mC)$ as $\mC$-algebras. Identifying $\Spin(4)$ as the subgroup $\Spin(4)\by_{\sets{\pm 1}}\sets{\pm 1}\subset \Spin^c(4)$, leads to an action of $\Spin(4)$ on $\mC^4$. % Cor 2.4.2 of Mogan's Sieberg-Witten book
This action yields an irreducible complex representation which extends to $\mCl(4)$. % Cor 2.6.3 of Morgan's SW book.
Set $$\omega_\mC:= -e_1e_2e_3e_4\in \Spin^c(4).$$ % Start of S2.3 of Morgan's SW book.
Observe that $\omega_\mC\in \Spin^c(4)$ and $\omega_\mC^2 = 1$. This means that the action of $\omega_\mC$ on $\mCl(4)$ has positive and negative eigenspaces. Let $S^\pm$ denote the subbundles of $S$ defined by the positive and negative action of $\omega_\mC$. 

Now, we can work to define the Seiberg-Witten map, first developed in \cite{Seiberg_1994,Seiberg_1994_2}. Let $A$ be a 1-form on $X$ valued in $\fru(1)= i\mR$. Let $$\gamma\colon TX\to \Hom(S^+,S^-)\subset \End(S)$$ denote the induced Clifford multiplication which works as one would expect fiberwise: a tangent vector Clifford multiplies a positive spinor, transforming the spinor into a negative spinor. $\nabla_A$ is defined to be a unitary connection %F-H Def 3.6.3
such that, for any $v\in\Gamma(TX)$ and $s\in \Gamma(S)$ 
$$\nabla_A(\gamma(v)s) = \gamma(v)\nabla_A(s)+\gamma(\nabla_{\text{LC}}v)s$$ where $\nabla_{\text{LC}}$ is the Levi-Civita connection for $(X,\frs,g)$. 
Let $F_A = \nabla_A\circ \nabla_A$ denote the curvature of the spin$^c$ connection $\nabla_A$, and $F_A^+$ denote the self-dual part of the curvature. Let $(-)_0$ denote the traceless part of an operator. We take the traceless part, since, on a vector bundle $E$, $$\End(E)\cong \eps \oplus \End_0(E).$$ This is because the identity map on fibers defines a nowhere vanishing global section of $\End(E)$, allowing us to peel off a copy of $\eps$. Let $\dirac_A$ denote the Dirac operator, twisted by $A$, given by 
$$\Gamma(S)\xr{\nabla_A}\Gamma(T^\ast X\otimes S)\xr{\sim} \Gamma(TX\otimes S)\xr{\gamma}\Gamma(S).$$ The Dirac operator can be thought of as a ``square root of the Laplacian'' which works for spin manifolds. In the arbitrary case, it may have some extra curvature terms when ``squared,'' but this difference is explained by the Lichnerowicz-Weitzenb\"{o}ck formula, discovered in \cite{Lichnerowicz1963}, which can be read about in course notes \cite{shintaro-notes-manolescu-283A-4-manifolds} and the book \cite{spingeometrylawsonmichelsohn}.  

Let $\sigma$ denote a section of the positive spinor bundle. This gives us enough to write the Seiberg-Witten map down:
\begin{center}
  \begin{tikzcd}[ampersand replacement = \&]
    \SW\colon\Gamma(S^+)\oplus \Omega^1(X;i\mR)\ar[r]\& \Gamma(S^-)\oplus \Omega^2_+(X;i\mR) \\ [-20pt]
    (\sigma,A)\ar[r,mapsto] \& (\dirac_A \sigma, F_A^+-\gamma\inv(\sigma\otimes \sigma^\ast)_0).
  \end{tikzcd}
\end{center}

The \textbf{Seiberg-Witten equations} are what we get when we try to find the zeros of this map, $$\dirac_A\sigma = 0,\qquad F_A^+ = \gamma\inv(\sigma\otimes \sigma^\ast)_0.$$ Before taking completions, this situation is analytically difficult. Therefore, we take the $L^2_k$-Sobolev completion of our source and the $L^2_{k-1}$-Sobolev completion of our target. This leads to more manageable analysis and nice properties of the zeros of this new map $\SWhat$ between the Sobolev completions. % 4.2 in Morgan's Seiberg-Witten book.

In our situation, $X$ is a spin manifold, so $\SWhat$ is $\Pintwo$-equivariant. % Shintaro 4.10 
We now set some notation for the source and target spaces. Let
\begin{align*}
    \calW^+ &:= \parenths{\Gamma(S^+)\oplus \Omega^1(X;i\mR)}^{\hat{~}}_{L^2_k}, & \calW^- &:=\parenths{\Gamma(S^-)\oplus \Omega^2_+(X;i\mR)}^{\hat{~}}_{L^2_{k-1}}
\end{align*}
These $\Pintwo$-Hilbert spaces can be split as 
\begin{align*}
\calV^+& := \parenths{\Gamma(S^+)}^{\hat{~}}_{L^2_k} & \calV^- &:= \parenths{\Gamma(S^-)}^{\hat{~}}_{L^2_{k-1}}\\
\calU^+& := \parenths{\Omega^1(X;i\mR)}^{\hat{~}}_{L^2_k} & \calU^- &:=\parenths{\Omega^2_+(X;i\mR)}^{\hat{~}}_{L^2_{k-1}}.
\end{align*} 
$\calV^\pm$ are quaternionic Hilbert spaces, endowed with a natural action by $\Pintwo\subset \text{Cl}(2)\cong \mH$ through left and right multiplication. $\calU^\pm$ are real Hilbert spaces, with left and right $\Pintwo$ actions given by $e^{i\theta}\cdot u = u$ and $j\cdot u = -u$. 

The Seiberg-Witten map is a compact Fredholm operator. That is, $\SWhat$ can be decomposed as $\SWhat = \ell + c$ where $\ell$ is Fredholm and $c$ is compact.  Fredholm means $\ell$ has finite-dimensional kernel and cokernel, and compact means the image of any bounded subset under $c$ is precompact. 

\subsubsectionlabel{Bauer-Furuta Invariant}
In their paper, \cite{Bauer_Furuta_SCR} and, subsequently, \cite{bauer_stable_cohomotopy_ii}, Bauer and Furuta use properties of this map, $\SWhat$, to define a stable map between spheres. We outline this process here. 

First, we set a complete $\Pintwo$-equivariant Grothendieck universe $\calU$ which contains, as a subspace, the concrete $\Pintwo$-representation $$\calU':=\bigoplus_\infty \mR\oplus \bigoplus_\infty \mtR\oplus \bigoplus_\infty \mH.$$ We denote $\Pintwo$-equivariant homotopy classes of maps between based, left $\Pintwo$-spaces $X,Y$ as $[X,Y]^{\Pintwo}$. We denote the stable homotopy classes of maps between based, left $\Pintwo$-spaces $X,Y$ as $\sets{X,Y}^{\Pintwo}$. We produce this object as the colimit over finite-dimensional subspaces of $\calU$, $$\sets{X,Y}^{\Pintwo} := \underset{\underset{fin. dim.}{V\subset\calU}}\colim~ [S^V\wedge X,S^V\wedge Y]^{\Pintwo}.$$ 
In our case, the incomplete universe $\calU'$ will suffice because our $\Pintwo$-CW complexes have isotropy groups $\Pintwo, S^1$, or $e$ \cite[pg. 5]{JLin20}. 
In the case where $X = S^U$ and $Y = S^W$ for some finite-dimensional $\Pintwo$-representations $U,W$, we use any of the equivalent notations to denote stable homotopy classes of maps, $$\pi_{U-W}^{\Pintwo}(S^0) = \pi_{U}^{\Pintwo}(S^W) = \sets{S^{U},S^{W}}^{\Pintwo}.$$ 

With notation settled, we can begin to define the Bauer-Furuta invariant. Let $W^-\subset \calW^-$ be a finite-dimensional subspace transverse to the image of $\ell$. Set $W^+:=\ell\inv(W^-)$. Be sure to choose $W^-$ such that $W^-\cong x\mH\oplus y\mtR$ for some natural numbers $x,y$. With these finite-dimensional vector spaces, we can define the approximated Seiberg-Witten map, which we denote $$\SWapp \equiv \ell+\proj_2 c\colon W^+\to W^-.$$ We also set $V^\pm$ to be those subspaces of $W^\pm$ which are quaternionic and  $U^\pm$ to be those subspaces isomorphic to finitely many summands of $\mtR$. 

Now, by compactness of $c$ and linearity of $f$, there exists $R,\eps>0$ such that $\SWhat\inv(B_\eps(0))\subset B_R(0)$. One can even ensure that $\SWhat(\dee B_{R+1})\subset \ol{W^-\setminus B_\eps(0)}$. With this in hand, we're able to define a map $\SWappsph\colon B_{R+1}(0)/\dee B_{R+1}(0) \to W^-/\ol{W^-\setminus B_\eps(0)}.$ Recognizing the source and target as the one-point compactifications of spheres, we see the map $$\SWappsph\colon S^{W^+}\to S^{W^-}.$$ 

One can show that the choices made earlier, such as the subspace $W^-$ and the lengths $R,\eps$, are independent of the $\Pintwo$-stable homotopy class of map realized by $\SWappsph$. Therefore, the Bauer-Furuta invariant is a well-defined invariant $$\BF^{\Pintwo}(X,\frs):=[\SWappsph]\in \pi^{\Pintwo}_{{W^+}}(S^{W^-}).$$ It is a theorem of Furuta \cite[Theorem 4.2]{furuta11over8}, % Theorem 4.2 I believe
that if a simply-connected four-manifold has intersection form $2pE_8\oplus q\Hmatrix$, then $$\BF^{\Pintwo}(X,\frs)\in \pi^{\Pintwo}_{p\mH}(S^{q\mtR})$$ and fits into a diagram of $\Pintwo$-spaces
\begin{center}
    \begin{tikzcd}[ampersand replacement = \&]
      \& S^0\ar[dl,"a^p_\mH"']\ar[dr,"a^q_{\mtR}"]\\
      S^{p\mH}\ar[rr,"{\BF^{\Pintwo}(X,\frs)}"]\&\& S^{q\mtR}
    \end{tikzcd},
\end{center} where $a^p_V$ denotes the inclusion of $0,\infty$ into $S^{pV}$. A map which satisfies the diagram property is known as a Furuta-Mahowald class, and the study of these maps fruitfully led to major progress on Matsumoto's 11/8-conjecture in \cite{Hopkins_Lin_Shi_Xu} as stated in \cite{furuta11over8}. 

\subsubsectionlabel{Families Bauer-Furuta Invariant}
The Bauer-Furuta invariant also has a family version, defined by Szymik and Xu \cite{szymik2020characteristic,Xu2004}. In this paper, we focus on the specific context where our family is over $S^1$. Here, we will define the construction we use, but note that the general situation can be found in the aforementioned resources. 

Let $(X,\frs)$ be a simply-connected four-manifold with spin structure $\frs$. Let $(E,\frs_E)$ be a smooth fiber bundle $\pi:E\to S^1$ with fiber $X$ and $\frs_E$ a spin structure on the vertical tangent bundle of $E$, $T^vE :=\ker(D\pi)$, which restricts to $\frs$ on each fiber. 

Our example comes from a diffeomorphism $f:X\to X$ and taking the mapping torus of $f$, $$T_f:=\frac{X\by [0,1]}{(x,0)\sim (f(x),1)}.$$ $f$ induces a map $df:\Fr(X)\to \Fr(X)$, which again has two lifts $\tilde{df}, \tilde{df}^\tau:\frs\to \frs$ between the spin structure on $X$. $\tilde{df}$ is just the product lift and $\tilde{df}^\tau$ is the twisted lift. This allows us to create the family spin structures, $\tilde{\frs}_f, \tilde{\frs}^\tau_f$ defined as 
\begin{align*}
  \tilde{\frs}_f:=T_{\tilde{df}} &= \frac{\frs\by [0,1]}{(p,0)\sim (\tilde{df}(p),1)},&   \tilde{\frs}^\tau_f:=T_{\tilde{df}^\tau} &= \frac{\frs\by [0,1]}{(p,0)\sim (\tilde{df}^\tau(p),1)}.
\end{align*} 
We will define Hilbert bundles associated to $\tilde{df}$, and the $\tilde{df}^\tau$ case is similar. Let $$\tilde{\calW}^\pm:=\frac{\calW^\pm\by [0,1]}{(w,0)\sim (\tilde{df}_\ast(w),1)}.$$

\begin{example}\label{example:key-families-over-circle-identity}
  We include the example of the identity diffeomorphism for concreteness and to come back to later in the argument. In this case 
  \begin{align*}
      \tilde{\frs}_{\uno}:=T_{\tilde{d\uno}} &= \frac{\frs\by [0,1]}{(p,0)\sim (p,1)},&   \tilde{\frs}^\tau_{\uno}:=T_{\tilde{d\uno}^\tau} &= \frac{\frs\by [0,1]}{(p,0)\sim (-1\cdot p,1)}.
  \end{align*} 
  This also allows us to describe how the families Seiberg-Witten map should go for each. We first construct the associated Hilbert bundles for each spin structure. For the family spin structures $\tilde{\frs}_{\uno},\tilde{\frs}^\tau_\uno$, we set 
  \begin{align*}
    \tilde{\calW}^\pm&:= \frac{\calW^\pm\by [0,1]}{(w,0)\sim (w,1)}& \tilde{\calW}^\pm_{\tau}&:=\frac{\calW^\pm\by [0,1]}{(w,0)\sim (\tilde{d\uno}^\tau_\ast(w),1)}\\
    &= \frac{\calU^\pm\oplus \calV^\pm\by [0,1]}{(u,v,0)\sim(u,v,1)}, & &=\frac{\calU^\pm\oplus \calV^\pm\by [0,1]}{(u,v,0)\sim(u,-v,1)}.\\
    &=\ul{\calU}^\pm\oplus \ul{\calV}^\pm & &= \ul{\calU}^\pm \oplus \frac{\calV^\pm\by [0,1]}{(v,0)\sim (-v,0)}.
  \end{align*} 
\end{example}

\lineyspace

With this in hand, we can define a compatible families Seiberg-Witten map, $$\FSWhat:\tilde{\calW}^+\to \tilde{\calW}^-,$$ which restricts to the Seiberg-Witten map on fibers. $\FSWhat = \tilde{\ell}+\tilde{c}$ where $\tilde{\ell}$ is a fiberwise Fredholm map and $\tilde{c}$ is a fiberwise compact operator. By Kuiper's theorem \cite{Kuiper1965TheHT}, $$\tilde{\calW}^-\cong S^1\by\calW^-.$$ Applying a similar process as the one we used above, choose a subbundle $S^1\by W^-\subset S^1\by\calW^-$ such that $S^1\by W^-+\tilde{\ell}(\tilde{\calW}^+) = S^1\by\calW^-$. This can be done since $S^1$ is a compact base and we can trivialize along neighborhoods and take a large enough $W^-$ to cover all fibers. Set $\tilde{W}^+:= \tilde{\ell}\inv(W^-)$. Now, we can choose sub-disk bundles $D(\tilde{W}^+), D(S^1\by W^-)$. Since $\FSWhat$ sends 0 sections to 0 sections and $\infty$-sections to $\infty$-sections, we can further restrict this to the Thom spaces $$\FSWhatappthom:\Th(\tilde{W}^+)\to \Th(S^1\by W^-) = S^1_+\wedge S^{W^-}\to S^{W^-}.$$ This invariant is a parametrized stable homotopy invariant, i.e., it is in the stable homotopy group $$\FSWhat^\Pintwo(E,\frs_E)\in \sets{\Th(\tilde{W}^+),S^{W^-}}^\Pintwo.$$

The main diffeomorphism in the arguments of this paper comes from the identity diffeomorphism on $\K3\#\K3\setminus B^4$. From arguments in \cite{KM20,JLin20} we can canonically, up to homotopy, trivialize $\tilde{W}^+$ as $S^1\by W^+$, so the map can be found in 
$$\FSWhatappthom\in\sets{S^1_+\wedge S^{W^+},S^{W^-}}^\Pintwo.$$

\subsubsectionlabel{Useful Relationships in Equivariant Homotopy}
The (families) Bauer-Furuta invariant has multiple incarnations depending on the equivariance of the Seiberg-Witten map you are choosing to involve. These are related through the short exact sequence $$\sets{e}\to S^1\to \Pintwo\to C_2\to \sets{e}.$$ There are restriction maps which take the (families) Bauer-Furuta map between the different $\Pintwo, S^1,$ and non-equivariant homotopy classes of maps. In general, for left-$\Pintwo$ spaces $X,Y$, there are maps between their different equivariant stable homotopy classes of maps, encapsulated by the restriction maps $$\sets{X,Y}^{\Pintwo}\xr{\text{Res}_{S^1}^{\Pintwo}}\sets{X,Y}^{S^1}\xr{\text{Res}_{\sets{e}}^{S^1}}\sets{X,Y}^{\sets{e}}.$$

\subsubsectionlabel{Properties of the Bauer-Furuta invariant}
The Bauer-Furuta invariant is a refinement of the Seiberg-Witten invariant, as there is a homomorphism, the characteristic homomorphism $$t:\sets{S^{a\mH+b\mR},S^{c\mtR}}^\Pintwo\to \mZ,$$ which takes the Bauer-Furuta invariant of $(X,\frs)$ to the Seiberg-Witten invariant, an integer. The explicit description of this was written in \cite{Bauer_Furuta_SCR}, and exposition with similar notation to this paper can be found in \cite{JLin20,BirgitSchmidt2003Spin4A}.

In \cite[\S38.2]{KM07}, Kronheimer and Mrowka provide a formula for $\SW(\K3,\frs)$ which equals 1. Since there is the characteristic homomorphism $t$, this means that, from Furuta's theorem, $$\BF^{\sets{1}}(\K3,\frs)\in \pi_1^s=\text{Res}_{\sets{1}}^\Pintwo\sets{S^{\mH},S^{3\mtR}}^\Pintwo$$ is nonzero, meaning that it must be $\eta$. 

There is a vanishing theorem for the Seiberg-Witten invariant, stating: if $X = X_0\#X_1$ with $b_2^+(X_i)>0$, then $\SW(X) =0$, found at \cite[2.4.6]{Gompf-Stip-Kirby-Book}. In a follow-up paper, \cite{bauer_stable_cohomotopy_ii}, Bauer proves that if $Y = X_0\#X_1$ with spin structure $\frs_Y$ (some compatible connected sum of the spin structures $\frs_0,\frs_1$ of $X_0, X_1$) then $$\BF^{\Pintwo}(Y,\frs_Y) = \BF^{\Pintwo}(X_0,\frs_0)\wedge \BF^{\Pintwo}(X_1,\frs_1),$$ the smash product of the two maps. This shows us that$$\BF^{\sets{1}}\parenths{(\K3)^{\# n},\frs} = \eta^n.$$ Note that $\pi_4^s = 0$, so $\eta^n$ is zero for $n\geq 4$. Of course, we can tell all of these connected sums of $\K3$ surfaces apart by Betti number calculations, but this shows that the Bauer-Furuta invariant is separating these surfaces finer than just the integral Seiberg-Witten invariant.

Now, we show the equivariant Bauer-Furuta invariant is a finer sieve than the non-equivariant version. In her thesis \cite[Cor. 4.4]{BirgitSchmidt2003Spin4A}, Schmidt does find that for $\Pintwo$-equivariant stable Hopf map $$\eta_\Pintwo:S^\mH\to S^{3\mtR},$$ any power of it, $\eta^n_{\Pintwo}$, remains nontrivial in the $\sfR\sfO(\Pintwo)$ graded stable homotopy groups $\pi^{\Pintwo}_\star(S^0)$ by passing to geometric fixed points under the $\Pintwo$ action. She also finds  that $$\BF^{\Pintwo}(\K3,\frs) = \eta_\Pintwo+(\text{other terms})$$ in \cite[Thm. 7.1]{BirgitSchmidt2003Spin4A}. So, using $\Pintwo$-equivariant homotopy theory, the different connected sums of $\K3$s each have invariants which are different elements of the stable equivariant homotopy groups. Still, Betti numbers are able to distinguish these, of course.

To be clear, the Bauer-Furuta invariant is indeed a refinement on the Seiberg-Witten invariant. Fintushel and Stern show that there are infinitely many inequivalent smooth structures on a homotopy $\K3$ in their paper \cite[Cor. 1.4]{FintushelStern1998}. If one takes a connected sum of these $\K3$ surfaces with the standard one, the Seiberg-Witten invariant vanishes. On the other hand, they can still be distinguished by the non-equivariant Bauer-Furuta invariant for various spin$^c$ structures. The $\Pintwo$-equivariant Bauer-Furuta invariant is also indeed a further refinement. For example, the boundary Dehn twist from the punctured $\K3\#S^2 x S^2$ can be detected by $\Pintwo$-equivariant families Bauer-Furuta invariant, but not the non-equivariant.

\subsectionlabel{Pin(2)-bundles and Actions}

In this section, we collect some useful actions of $\Pintwo$ on some spaces, some related subgroups, and quotients on a variety of spaces. $\Pintwo$ fits in the short exact sequence 
\begin{equation}
  1\to S^1\to \Pintwo\to C_2\to 1.\label{eqn:ses-pintwo}
\end{equation} 
$\Pintwo$ is also a subgroup of $\Sp(1)\cong \SU(2)\cong S(\mH)$. The contractible space $S^\infty\subset \mH^{\oplus \infty}$ is both a left and right $\Pintwo$-space by coordinatewise multiplication. Some quotients of $S^\infty$ by several subgroups are collected below 
\begin{align*}
  _{S^1\setminus} S^\infty&=\mCP^\infty \text{ is a $BS^1$,} & _{\Pintwo\setminus} S^\infty&=_{C_2\setminus}\mtCP^{\infty}\text{ is a $B\Pintwo $,} & _{\Sp(1)\setminus}S^{\infty}&=\mHP^\infty \text{ is a $B\Sp(1)$.}
\end{align*}
$C_2$ acts on $\mCP^\infty$ as the quotient of the action of $\Pintwo$ on $S^\infty$, specifically by $$[j]\cdot[\alpha_0:\beta_0:\alpha_1:\beta_1:\cdots] = [-\ol{\beta}_0:\ol{\alpha}_0:-\ol{\beta}_1:\ol{\alpha}_1:\cdots].$$

The map $S^1\into \Sp(1)$ induces a map $BS^1\to B\Sp(1)$, witnessed as the map \begin{center}
  \begin{tikzcd}[ampersand replacement = \&]
    \mtCP^\infty\ar[r]\& \mHP^\infty \\ [-20pt]
    [\alpha_0:\beta_0:\alpha_1:\beta_1:\cdots]\ar[r,mapsto] \& {[\alpha_0+\beta_0\mathbf{j}:\alpha_1+\beta_1\mathbf{j}:\cdots]}
  \end{tikzcd}
\end{center} which has $S^2$ as its fiber. This is because of the quotient of $S^3/S^1 = S^2$ in the Hopf fibration $S^1\to S^3\to S^2.$ % but one can convince themselves homotopically by looking at the Puppe sequence associated to $BS^1\to B\Sp(1)$.  
Furthermore, the inclusion $\Pintwo \into \Sp(1)$ induces $p:B\Pintwo\to \mHP^\infty$ which has fiber $\mRP^2$ since the quotient is by an additional $C_2$-action coming from $j$. Note that $$p\inv(\mHP^n) = B\Pintwo^{(4n+2)},$$ the $(4n+2)$-skeleton of $B\Pintwo$, and is the total space of an $\mRP^2$-bundle over $\mHP^n$.

\begin{remark}\label{constr:s2-bundles}
  Let $F$ be a topological space with a left $\Pintwo$ action and a right $S^1$-action such that for any $f\in F$, $-1\cdot f = f\cdot -1$. Then there exists a map 
  \begin{center}
    \begin{tikzcd}[ampersand replacement = \&]
      \ctheta:S^1\by F\ar[r]\& F \\ [-20pt]
      (z,f)\ar[r,mapsto] \& \sqrt{z}\cdot f \cdot \ol{\sqrt{z}}.
    \end{tikzcd}
  \end{center}
  Let $\cthetabar$ denote the map where the conjugation is flipped, i.e., $\ol{\sqrt{z}}\cdot f \cdot \sqrt{z}$. Though the square root of a complex number is not well-defined, this map is well-defined. Two square roots of a complex number $z$ differ by a factor of $-1$. This means that, by construction, $$(-\sqrt{z})\cdot f \cdot (-\ol{\sqrt{z}}) = \sqrt{z}\cdot f\cdot\ol{\sqrt{z}}.$$ 

  We define a new space $E_F$ via a clutching construction. $$E_F:=D^2\by F\cup_{S^1\by F}D^2\by F$$ where the attaching map is given as 
  \begin{center}
    \begin{tikzcd}[ampersand replacement = \&]
      S^1\by F\ar[r]\& S^1\by F \\ [-20pt]
      (z,f)\ar[r,mapsto] \& (z,\ctheta(z,f)).
    \end{tikzcd}
  \end{center}
  This creates an $F$-bundle over $S^2$. Note that $\ctheta$ is not $\Pintwo$-equivariant, but, if we quotient by the left $S^1$-action, $_{S^1\setminus}\ctheta$ is $C_2$-equivariant. When $F$ is a representation sphere $S^V$, we write $E_V$ for $E_{S^V}$ for brevity. 
\end{remark}\lineyspace

\subsectionlabel{Some Complex Representation Theory}
The main groups at play in our paper come from the short exact sequence in Equation (\ref{eqn:ses-pintwo}). It is good to have in mind the complex representation rings associated to each of these and certain transfer and restriction maps between them. A collection of results we define in this section can be found in \cite{manolescu2014intersection}, and these facts come from the books and papers \cite{atiyah2018k,atiyah1968bott,BirgitSchmidt2003Spin4A,Segal}

$\sfR(G)$ is the Grothendieck ring of all complex representations of $G$ where we allow $\ominus$ to work as an inverse to $\oplus$ and  $\otimes$ is our product. For our purposes, we need to know the complex representation rings of the groups $S^1, \Pintwo, C_2$, and $\sets{e}$. The final group in this list has an easy representation ring of $\sfR(\sets{e}) = \mZ$.  

An important operation in representation rings is the $\lambda_k$ operation, defined on a representation $V$ as 
\begin{equation}
  \lambda_{k}(V) = \sum_{n=0}^\infty k^n V^{\wedge n}.\label{eqn:lambda-k}
\end{equation} 
We particularly care about the case $k = -1$,  
\begin{equation}
  \lambda_{-1}(V) = \sum_{n=0}^\infty (-1)^n V^{\wedge n}.\label{eqn:lambda-neg-one}
\end{equation} 
This is because, in equivariant Bott periodicity, the isomorphism of $\sfK$-groups \begin{center}
  \begin{tikzcd}[ampersand replacement = \&]
    \tilde{\sfK}_G(\Sigma^V X)\ar[r,"\simeq"]\& \tilde{\sfK}_G(X) \\ [-20pt]
    [E]-[F]\ar[r,mapsto] \& \lambda_{-1}(V)([E]-[F]).
  \end{tikzcd}
\end{center} We call the $\lambda_{-1}(V)$ the \textbf{$\sfK$-theoretic Euler class of the representation $V$}. This $\lambda_{-1}(V)$ is sometimes written as $a_V$. If we have $n$ direct sums of $V$, we can write $a_{nV} = a^n_V$. Since we work in $\sfK$-theory which deals with complex vector bundles, the $\sfK$-theoretic Euler classes must work with complex vector spaces.  There are $\sfK\sfO$-theoretic Euler classes as well which work similarly. 

Consider the inclusion $0\into V$. If we take the one-point compactification of $V$, viewing the one point as $\infty$, and separately we add $\infty$ to the set $\sets{0}$, we can define a map $i:S^0\into S^V$. In $\sfK_G$-theory, this is the map $$\sfK_G(S^V)\xr{i^\ast}\sfK_G(S^0)$$ which induces multiplication by $\lambda_{-1}(V)$ aka $a_V$. 

This $\lambda_{-1}$ map also commutes with restriction maps. We will write the representation rings based on concrete representations as generators and then write the rings using the $\lambda_{-1}$ versions of the generators. The latter will be more convenient for us in the $\sfK$-theory calculations of \S\ref{subsec:Equivariant K-theory}. 

\begin{description}
  \item[($S^1$)] $S^1$ has representations $\mC$ with trivial $S^1$ action, $\theta$ which is $\mC$  with the usual multiplication, and $\ol{\theta}$ which is $\mC$ with conjugate multiplication. The representation ring is then $$\sfR(S^1) \cong \frac{\mZ[\theta,\ol\theta]}{(\theta\ol\theta-1)}.$$
  
  Let $a := \lambda_{-1}(\theta) = 1-\theta$ and $b := \lambda_{-1}(\ol{\theta}) = 1-\ol\theta$. This gives an alternative presentation, $$\ds \sfR(S^1) \cong \frac{\mZ[a,b]}{(ab-a-b)}.$$
  \item[($C_2$)] $C_2$ has representations $\mC$ with trivial $C_2$ action, and $\sigma$, the one-dimensional representation with the action of multiplying by $-1$. The representation ring is then $$\sfR(C_2) \cong \frac{\mZ[\sigma]}{(\sigma^2-1)}.$$
  
  Let $\tilde{w} := \lambda_{-1}(\sigma) = 1-\sigma$. This gives an alternative presentation, $$\ds \sfR(C_2) \cong \frac{\mZ[\tilde{w}]}{(\tilde{w}^2-2\tilde{w})}.$$
  \item[($\Pintwo$)] $\Pintwo$ has representations $\mC$ with trivial left $\Pintwo$ action, $\mtC$ with action given by $e^{i\theta}\cdot z = z$ and $j\cdot z = -z$, and $\mH$ with left action given by left multiplication. The representation ring is then $$\sfR(\Pintwo) \cong \frac{\mZ[[\mtC],[\mH]]}{([\mtC]^2-1,[\mtC][\mH]-[\mH])}.$$
  
  Let $w := \lambda_{-1}(\mtC) = 1-\mtC$ and $z := \lambda_{-1}(\mH) = 2-\mH$. This gives an alternative presentation, $$\ds \sfR(\Pintwo) \cong \frac{\mZ[w,z]}{(w^2-2w,zw-2w)}.$$ Note that in this paper $w,z$ correspond to the $\sfK$-theoretic Euler classes $a_{\mtC},a_{\mH}$ respectively. 
\end{description}

We include some words about these maps under the restriction homomorphism. Restricting to the trivial representation, each of the lower-case Roman-letter generators, $a,b,z,w$ has virtual dimension 0. For the restriction of $\Pintwo$ to $S^1$, we explicitly write the image of the generators here: 
\begin{align*}
  \Res_{S^1}^\Pintwo(w)&=0, & \Res_{S^1}^{\Pintwo}(z)&=a+b.
\end{align*}

This concludes the prerequisite material. We are now prepared to move on to the problem.

\sectionlabel{Application}

\subsectionlabel{Spaces and Facts}
Let $X:=\K3\#\K3$ and let $X^\circ:=(X\setminus B^4,\dee X)$ where the 4-ball is removed away from the neck. Let $\frs$ denote the unique spin structure on $X$. Let $\uno$ denote the identity map on $X^\circ$, and let $\delta^\circ$ denote the boundary Dehn twist on $X^\circ$. One can extend the Dehn twist to all of $X$ by making the diffeomorphism the identity on the removed four-ball, and we call this $\delta$.

Let $T_\uno,T_\delta$ denote the mapping tori of the identity map and the Dehn twist. 

In both cases, $\sfH^1(T_f;\mZ/2) = \mZ/2$, so both $T_\uno, T_\delta$ have two family spin structures. The construction of their family spin structures was laid out in \S\ref{subsubsec:Families Bauer-Furuta Invariant} and for the identity specifically in Example \ref{example:key-families-over-circle-identity}.

Since, in a collar neighborhood $N'$ of $\dee X^\circ$, $\delta^\circ$ is the identity, we know there is an identification $$\psi: T_{\delta}|_{N'\by S^1}\xr{\sim}T_\uno|_{N'\by S^1},$$ wherein the source is a subset of $T_\delta$ and the target is a subset of $T_\uno$. For $N'\by S^1\subset T_\uno$, we let $\tilde{\frn},\tilde{\frn}^\tau$ denote $\tilde{\frs}|_{N'\by S^1},\tilde{\frs}^\tau|_{N'\by S^1}$, respectively. We then denote the two spin families over $T_\delta$ by $\tilde{\frs'}$ and $\tilde{\frs'}^\tau$ where we enforce that $\tilde{\frs'}$ corresponds to the bundle where $\tilde{\frs'}|_{N'\by S^1} = \psi^\ast\tilde{\frn}$ and $\tilde{\frs'}^\tau|_{N'\by S^1} = \psi^\ast\tilde{\frn}^\tau$.

\begin{proposition}\label{prop:n-bundle-prop}
  There is a map $\phi\colon T_\uno\to T_\delta$ which induces the isomorphisms $$(T_\uno,\tilde\frs)\cong (T_\delta,\tilde{\frs'}^\tau)\text{ and }(T_\uno,\tilde\frs^\tau)\cong (T_\delta,\tilde{\frs'}).$$
\end{proposition}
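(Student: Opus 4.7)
The plan is to construct $\phi$ from a smooth isotopy of $\delta$ to the identity in $\Diff(X)$ (not in $\Diff_\dee(X^\circ)$), and then to identify how this isotopy lifts to the spin structure. The key enabling fact is that although (we hope) $\delta$ is exotic in $\Diff_\dee(X^\circ)$, its extension to $X$ by the identity on the removed $B^4$ \emph{is} smoothly isotopic to $\uno$ in $\Diff(X)$: in the long exact sequence of Remark \ref{rmk:dehn-twist-in-image-of-LES},
\[
\pi_1(\Diff(X))\to\pi_1(\Fr(X))\xr{\dee}\pi_0(\Diff_\dee(X^\circ))\to\pi_0(\Diff(X)),
\]
$[\delta]$ is the image under $\dee$ of the generator $\alpha$ of $\pi_1(\Fr(X))=\mZ/2$, so $[\delta]=0$ in $\pi_0(\Diff(X))$ by exactness.

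First I would fix a smooth path $\delta_t\in\Diff(X)$ with $\delta_0=\uno$, $\delta_1=\delta$, chosen so that the loop $t\mapsto d\delta_t(\theta_0)$ realizes $\alpha$ at a basepoint frame $\theta_0\in\Fr_{x_0}(X)$, where $x_0$ is the center of the removed $B^4$ (so $\delta$ and $d\delta$ are the identity in a neighborhood of $x_0$). Define $\tilde\phi\colon X\by[0,1]\to X\by[0,1]$ by $(x,t)\mapsto(\delta_t(x),t)$; this is a diffeomorphism, and since $\tilde\phi(x,0)=(x,0)$ while $\tilde\phi(x,1)=(\delta(x),1)$, it descends to a diffeomorphism $\phi\colon T_\uno\to T_\delta$ (the relation $(x,0)\sim(x,1)$ of $T_\uno$ maps to the relation $(x,0)\sim(\delta(x),1)$ of $T_\delta$).

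The heart of the argument is to track the canonical lift $\widetilde{d\delta_t}\colon\frs\to\frs$ of $d\delta_t$, a continuous path of $\Spin(4)$-bundle maps with $\widetilde{d\delta_0}=\uno$, and to prove the central claim
\[
\widetilde{d\delta_1}=\widetilde{d\delta}^\tau.
\]
By construction the loop $t\mapsto d\delta_t(\theta_0)$ represents the nontrivial class of $\pi_1(\Fr(X))\cong\pi_1(\SO(4))=\mZ/2$ (the isomorphism uses that $X$ is simply-connected and spin, forcing the connecting map $\pi_2(X)\to\pi_1(\SO(4))$ to vanish). Since $X$ is spin, $\frs\to\Fr(X)$ is the connected double cover, so this nontrivial loop does not lift to a loop: the lifted path from any $\tilde\theta_0\in\frs_{x_0}$ ends at $-\tilde\theta_0$. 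Hence $\widetilde{d\delta_1}$ acts as $-\uno$ on $\frs_{x_0}$, whereas the product lift $\widetilde{d\delta}$ acts as $\uno$ there (because $\delta=\uno$ near $x_0$). As the two lifts of $d\delta$ differ globally by the central element $-1\in\Spin(4)$, this pins down $\widetilde{d\delta_1}=-\widetilde{d\delta}=\widetilde{d\delta}^\tau$.

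Finally, the bundle map $(p,t)\mapsto(\widetilde{d\delta_t}(p),t)$ on $\frs\by[0,1]$ covers $\tilde\phi$ and descends in two ways. It identifies the gluing $(p,0)\sim(p,1)$ of $\tilde\frs$ with the gluing $(p,0)\sim(\widetilde{d\delta}^\tau(p),1)$ of $\tilde{\frs'}^\tau$, establishing $(T_\uno,\tilde\frs)\cong(T_\delta,\tilde{\frs'}^\tau)$. Applied to the $(-1)$-twisted gluing $(p,0)\sim(-p,1)$ of $\tilde\frs^\tau$, the same bundle map yields $(p,0)\sim(-\widetilde{d\delta}^\tau(p),1)=(\widetilde{d\delta}(p),1)$, which is $\tilde{\frs'}$; the same $\phi$ thus realizes the second isomorphism, and compatibility with the collar identification $\psi$ follows automatically. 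The main obstacle is the identification $\widetilde{d\delta_1}=\widetilde{d\delta}^\tau$: I must argue that the path $\delta_t$, chosen as any smooth lift of the LES generator $\alpha$, necessarily produces nontrivial monodromy in the spin double cover — independent of the many choices in $\delta_t$ — which is where spinnability of $X$ and connectedness of $\frs$ are essential.
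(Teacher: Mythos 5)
Your proposal is correct and takes essentially the same approach as the paper: both build $\phi$ from a path in $\Diff(X)$ (not rel boundary) from $\uno$ to $\delta$, and both detect the twist on spin structures by noting that the induced loop of frames is nontrivial in $\pi_1(\Fr(X))\cong\mZ/2$, so the lift to the connected spin double cover flips sign, turning the product gluing into the twisted one. The paper constructs this path explicitly as a rotation supported on the collar and reads the nontrivial loop off the derivative at a fixed point, whereas you invoke the long exact sequence to produce the path abstractly and then fix its image loop class to be the generator $\alpha$; this is a difference of explicitness rather than of mathematical content.
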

\begin{proof}
  Let 
  \begin{align*}
    T_\uno^\circ&=\frac{X^\circ\by [0,1]}{(x,0)\sim (x,1)}\text{ and}& T_\delta^\circ &=\frac{X^\circ\by [0,1]}{(x,0)\sim (\delta(x),1)},
  \end{align*}
  and take note of the fact that $N'\by S^1$ is a subspace of both of these. let us call $N' =S^3\by [0,1]$ and $N'' = S^3\by [1/2,1]$. Define a monotonically increasing smooth function $r:I\to I$ such that $r(t)$ is 0 near 0 and is 1 for $t\geq \frac{1}{2}$. We define a map \begin{center}
    \begin{tikzcd}[ampersand replacement = \&]
      \phi_{N'}:T_\uno^\circ |_{N'\by I/\sim}\ar[r]\&  T_\delta|_{N'\by I/\sim} \\ [-20pt]
      {[((x,y),t,s)]}\ar[r,mapsto] \& {[((e^{ir(t)s}x,y),t,s)]}
    \end{tikzcd}.
  \end{center}
  We can extend $\phi_{N'}$ to $\phi:T^\circ_\uno\to T^\circ_\delta$ via the identity. Note that $\phi|_{s=0}$ is the identity, and $\phi|_{s=1}$ is (smoothly homotopic to) the Dehn twist.

  Now, consider the fixed points of the map $\phi_{N'}$ given by  $[((0,1),1,s)]\in T_\uno^\circ$ and let $z$ denote the triple $((0,1),1)$. When we apply $\phi(-,s)_\ast$ to $T_z N''$ for each $s\in I/\dee I$, we see that we get a nontrivial loop in $\pi_1(\SO(4))$. Therefore, $\phi^\ast(\tilde{\frn}^\tau) = \tilde{\frn}$, and extending to the whole of $T_\uno$ and $T_\delta$ by the identity yields the stated isomorphism in the proposition. 
\end{proof}

This yields the following corollary, whose contrapositive determines whether or not $\delta\simeq \uno$.
\begin{corollary}
  If $\delta^\circ\simeq \uno$ in $\Diff_\dee(X^\circ)$, then $$\FBF^\Pintwo(T_\uno,\tilde{\frs}) = \FBF^{\Pintwo}(T_\uno,\tilde{\frs}^\tau).$$
\end{corollary}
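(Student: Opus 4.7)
The plan is to convert a hypothetical smooth isotopy from $\uno$ to $\delta^\circ$ into a diffeomorphism of mapping tori that matches the spin families on both sides, and then invoke Proposition \ref{prop:n-bundle-prop} to deduce the desired equality.

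First I would use the hypothesized isotopy $H\colon X^\circ\times I\to X^\circ$, with $H_0=\uno$, $H_1=\delta^\circ$, and $H_s$ equal to the identity on a collar of $\dee X^\circ$ for every $s$, to construct a bundle diffeomorphism $\Psi\colon T_\uno\to T_\delta$ covering the identity on $S^1$. Concretely, on the punctured mapping tori I would set $\Psi([x,t]) = [H_t(x),t]$; well-definedness amounts to checking that the relation $(x,0)\sim(x,1)$ on the left matches $(x,0)\sim(\delta^\circ(x),1)$ on the right via $H_0(x)=x$ and $H_1(x)=\delta^\circ(x)$. Because $H_s$ is the identity on the collar, $\Psi$ restricts to the identity on $N'\by S^1$ and therefore extends over the glued-in $B^4\by S^1$ as the identity, producing the claimed bundle diffeomorphism on all of $T_\uno\to T_\delta$.

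Next I would identify the pullback of the spin families under $\Psi$. The two spin families on each mapping torus are parametrized by $\sfH^1(T_f;\mZ/2)=\mZ/2$ and are fully determined by their restriction to the collar neighborhood $N'\by S^1$, namely by being either $\tilde{\frn}$ or $\tilde{\frn}^\tau$ there. Since $\Psi$ acts as the identity on the collar and $\tilde{\frs'}|_{N'\by S^1}=\tilde{\frn}$, $\tilde{\frs'}^\tau|_{N'\by S^1}=\tilde{\frn}^\tau$ by construction, this forces $\Psi^\ast\tilde{\frs'}=\tilde{\frs}$ and $\Psi^\ast\tilde{\frs'}^\tau=\tilde{\frs}^\tau$. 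By the naturality of the families Bauer-Furuta invariant under spin bundle isomorphisms, this yields
$$\FBF^\Pintwo(T_\uno,\tilde{\frs}^\tau)=\FBF^\Pintwo(T_\delta,\tilde{\frs'}^\tau).$$
Proposition \ref{prop:n-bundle-prop} provides a separate spin bundle isomorphism $(T_\uno,\tilde{\frs})\cong(T_\delta,\tilde{\frs'}^\tau)$, and hence $\FBF^\Pintwo(T_\uno,\tilde{\frs})=\FBF^\Pintwo(T_\delta,\tilde{\frs'}^\tau)$. Chaining the two equalities yields the claimed identity.

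The step most deserving of care is the identification $\Psi^\ast\tilde{\frs'}^\tau=\tilde{\frs}^\tau$ rather than the a priori possible swap $\Psi^\ast\tilde{\frs'}^\tau=\tilde{\frs}$. The key observation is that because the isotopy can be taken (and indeed must be taken) to act trivially on the collar for all time $s$, the behavior of $\Psi$ on the $S^1$-factor of $N'\by S^1$ is rigidly fixed, which pins down the spin family correspondence. Everything else is essentially naturality of the families Bauer-Furuta invariant under diffeomorphisms of families respecting spin structure, a standard property of the construction.
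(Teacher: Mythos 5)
Your proposal is correct and takes essentially the same approach as the paper: you build the bundle diffeomorphism $T_\uno\to T_\delta$ from the hypothetical isotopy, observe that it pulls back $\tilde{\frs'}\mapsto\tilde{\frs}$ and $\tilde{\frs'}^\tau\mapsto\tilde{\frs}^\tau$, and then chain this with the isomorphisms of Proposition \ref{prop:n-bundle-prop} exactly as the paper does. The only difference is that you spell out the construction of $\Psi$ and the argument that the collar behavior pins down which spin family pulls back to which, steps the paper leaves implicit.
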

\begin{proof}
  Suppose that $\delta^\circ\simeq \uno$ in $\Diff_\dee(X^\circ)$, then $(T_\uno, \tilde{\frs})\cong (T_\delta,\tilde{\frs'})$ and $(T_\uno, \tilde{\frs}^\tau)\cong (T_\delta,\tilde{\frs'}^\tau)$. This and \ref{prop:n-bundle-prop} yield isomorphisms $$(T_\uno,\tilde{\frs})\cong (T_\delta,\tilde{\frs'}^\tau)\cong (T_\uno,\tilde{\frs}^\tau).$$ This implies $$\FBF^{\Pintwo}(T_\uno,\tilde{\frs}) = \FBF^{\Pintwo}(T_\uno,\tilde{\frs}^\tau).$$ 
\end{proof}

% Notice that the contrapositive of this corollary says, if $\FBF^\Pintwo(T_{\uno},\tilde{\frs})\neq \FBF^\Pintwo(T_{\uno},\tilde{\frs}^\tau)$, then $\delta^\circ\not\simeq \uno$. 
The rest of the paper uses this corollary to determine that the boundary Dehn twist cannot be isotopic to $\uno$ on $X$. 

\sectionlabel{Calculations}
\subsectionlabel{Explicit Description of the two FBF maps on the mapping torus}

Let $\FBF(\tilde{\frs}),\FBF(\tilde{\frs}^\tau)$ denote $\FBF^{\Pintwo}(T_\uno,\tilde\frs), \FBF^{\Pintwo}(T_\uno,\tilde\frs^\tau)$, respectively. Since, by hypothesis $[\FBF(\tilde{\frs})] = [\FBF(\tilde{\frs}^\tau)]$ in the set $\sets{S^1_+\wedge S^{2\mH},S^{6\mtR}}^\Pintwo$, this means, for $V$ a large enough $\Pintwo$ representation in our universe $\calU$, there is always a homotopy $$H_V \colon S^1_+\wedge S^{2\mH+V}\wedge I_+\to S^{6\mtR+V}$$ where $H_V|_{0} = \FBF(\tilde{\frs})_V$ and $H_V|_{1} = \FBF(\tilde{\frs}^\tau)_V$. 

First, let us calculate $\FBF(\tilde{\frs}), \FBF(\tilde{\frs}^\tau)$ explicitly on the space level. Let $\calW^+$ denote the source and let $\calW^-$ denote the target of the Seiberg-Witten map on $(X,\frs)$. Let $W^\pm\subset \calW^\pm$ be subspaces which satisfy the requirements to define the Bauer-Furuta map. Recall that $$W^\pm \cong U^\pm \oplus V^\pm,$$ with $U^\pm$ isomorphic to a finite direct sum of the representation $\mtR$ and $V^\pm$ isomorphic to a finite direct sum of the representation $\mH$. 

For each $\theta\in [0,\pi]$, define the bundles over $S^1$ as $$_\theta W^\pm:= \frac{W^\pm\by [0,1]}{(w,0)\sim (e^{i\theta}w,1)} \cong \ul{U}^\pm \oplus \frac{V^{\pm}\by [0,1]}{(v,0)\sim (e^{i\theta}v,1)}.$$ We put the $\theta$ on the left to denote left-multiplication. Since $\SWapp$ is $\Pintwo$-equivariant on the left, we can easily define the map 
\begin{center}
  \begin{tikzcd}[ampersand replacement = \&]
    _\theta\SW: \,_\theta W^+\ar[r]\& _\theta W^- \\ [-20pt]
    {[(w,t)]}\ar[r,mapsto] \& {[(\SW(w),t)].}
  \end{tikzcd}
\end{center}

Consider also the collection of bundle maps
\begin{center}
  \begin{tikzcd}[ampersand replacement = \&]
    c_\theta:\,_\theta W^\pm\ar[r]\& _\theta W^\pm \\ [-20pt]
    {[(w,t)]}\ar[r,mapsto] \& {[(e^{-i\pi t}we^{i\pi t},t)]}.
  \end{tikzcd}
\end{center}
Let the inverses of these, where we multiply on the left by $e^{i\pi t}$ and on the right by $e^{-i\pi t}$, be denoted $\ol{c}_\theta$. These are bundle isomorphisms, but they are not quaternionic linear because of the left-multiplication, and hence not $\Pintwo$-equivariant.

Note that, given a choice $W^\pm$, $_0\SW$ can be used to define the stable homotopy class of $\FBF(\tilde{\frs})$ and $_\pi\SW$ for the stable homotopy class of $\FBF(\tilde{\frs}^\tau).$

Following a similar argument to \cite[Prop. 4.1]{KM20}, we get the following lemma. 

\begin{lemma}{}{}\label{lem:prod-twisted-fbf-calc}
  % $\FBF(\tilde{\frs}^\tau)$  = e^{-i\theta}\FBF(\tilde{\frs})(e^{i\theta}(-)e^{-i\theta})e^{i\theta} = \cthetabar\circ f\circ \ctheta$. 

  $c_0\circ\, _0\SW\circ\, \ol{c}_0$ can also be used to define the $\Pintwo$-equivariant stable homotopy class $\FBF(\tilde{\frs}^\tau)$. 
\end{lemma}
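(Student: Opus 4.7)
The strategy is to factor $c_0 \circ {_0\SW} \circ \ol{c}_0$ through $_\pi\SW$ via a pair of $\Pintwo$-equivariant bundle isomorphisms between $_0 W^\pm$ and $_\pi W^\pm$, so that applying the Thom construction realizes the same $\Pintwo$-equivariant stable homotopy class. I would introduce two genuinely different bundle isomorphisms: a non-equivariant one $\Phi^\pm\colon {_0 W^\pm} \to {_\pi W^\pm}$ by left multiplication $[v,t] \mapsto [e^{-i\pi t}v, t]$, and an equivariant one $\Psi^\pm\colon {_\pi W^\pm} \to {_0 W^\pm}$ by right multiplication $[v,t] \mapsto [v\, e^{i\pi t}, t]$. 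On the $V^\pm$-piece these act via quaternionic left/right multiplication; on the $U^\pm$-piece they are the identity, since both $e^{i\pi t}$ and $e^{-i\pi t}$ act trivially there. Well-definedness on both sides follows because $e^{\pm i\pi} = -1$ matches the clutching twist defining $_\pi W^\pm$. The crucial distinction is that $\Phi^\pm$ fails to be $\Pintwo$-equivariant since $j\,e^{-i\pi t} = e^{i\pi t}\,j$, whereas $\Psi^\pm$ is $\Pintwo$-equivariant because $g(v\, e^{i\pi t}) = (gv)\, e^{i\pi t}$ by the associativity of multiplication in $\mH$.

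I would next verify the factorizations $c_0 = \Psi^- \circ \Phi^-$ and $\ol{c}_0 = (\Phi^+)^{-1} \circ (\Psi^+)^{-1}$ directly from the explicit formulas, and then prove the key identity
\[
\Phi^- \circ {_0\SW} \circ (\Phi^+)^{-1} = {_\pi\SW}.
\]
Tracing: $(\Phi^+)^{-1}([u, v, t]) = [u, e^{i\pi t} v, t]$ in $_0 W^+$; applying $_0\SW$ and invoking left $\Pintwo$-equivariance with $e^{i\pi t} \in S^1 \subset \Pintwo$ (trivial on $U^-$, left multiplication on $V^-$) yields $[(\SW_U(u,v),\, e^{i\pi t}\SW_V(u,v)), t]$; finally $\Phi^-$ left-multiplies the $V^-$-component by $e^{-i\pi t}$, cancelling the $e^{i\pi t}$ factor and returning $[\SW(u, v), t] = {_\pi\SW}([u, v, t])$. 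In other words, the left conjugation factors built into $\Phi^\pm$ are precisely absorbed by the left $\Pintwo$-equivariance of $\SW$.

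Substituting the factorizations gives
\[
c_0 \circ {_0\SW} \circ \ol{c}_0 \;=\; \Psi^- \circ \bigl(\Phi^- \circ {_0\SW} \circ (\Phi^+)^{-1}\bigr) \circ (\Psi^+)^{-1} \;=\; \Psi^- \circ {_\pi\SW} \circ (\Psi^+)^{-1}.
\]
Since $\Psi^\pm$ is a $\Pintwo$-equivariant bundle isomorphism, the induced Thom space map $\Th(\Psi^\pm)$ is a $\Pintwo$-equivariant homeomorphism; hence the Thom constructions of $c_0 \circ {_0\SW} \circ \ol{c}_0$ and of $_\pi\SW$ differ only by equivariant reidentifications of source and target and thus represent the same $\Pintwo$-equivariant stable homotopy class, which by definition is $\FBF(\tilde{\frs}^\tau)$. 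The main obstacle is the bookkeeping required to see that the non-equivariant left-multiplication pieces inside $c_0$ and $\ol{c}_0$ combine with the left $\Pintwo$-equivariance of $\SW$ to produce exactly the right-multiplication clutching change encoded by $\Psi^\pm$, whose $\Pintwo$-equivariance is then automatic from associativity in $\mH$; this is what makes the twist $\tilde{\frs}^\tau$ emerge from an identity-bundle computation.
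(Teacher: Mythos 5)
Your proposal is correct and follows essentially the same route as the paper. The paper defines $q^\pm\colon {_\pi W^\pm}\to {_0W^\pm}$ by right multiplication $[(w,t)]\mapsto[(we^{i\pi t},t)]$ --- these are exactly your $\Psi^\pm$ --- and then verifies directly that $c_0\circ{_0\SW}\circ\ol{c}_0 \equiv q^-\circ{_\pi\SW}\circ\ol{q}^+$ elementwise; your version unpacks this elementwise check into the factorizations $c_0=\Psi^-\circ\Phi^-$, $\ol{c}_0=(\Phi^+)^{-1}\circ(\Psi^+)^{-1}$ and the cancellation $\Phi^-\circ{_0\SW}\circ(\Phi^+)^{-1}={_\pi\SW}$ via left $S^1$-equivariance, which is a slightly more transparent packaging of the same computation. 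One point you gloss over: the paper explicitly invokes the fact that $\Sp(n)$ is $2$-connected, so that \emph{any} choice of quaternionic-linear identification between $_0W^\pm$ and $_\pi W^\pm$ is homotopic to any other, and hence conjugating $_\pi\SW$ by the specific $\Psi^\pm$ does not change the resulting stable class; your phrase ``differ only by equivariant reidentifications and thus represent the same stable homotopy class'' silently uses this fact, and you should state it to make the final step rigorous.
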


By ``can define,'' we mean that taking fiberwise one point compactifications and stabilizing, we arrive at the correct stable homotopy class. 
\begin{proof} 
  Let $\textbf{W}^\pm := \ds\bigcup_{\theta\in [0,\pi]} \,_\theta W^\pm$. We can define a map over $[0,\pi]$ given by \begin{center}
  \begin{tikzcd}[ampersand replacement = \&]
    \textbf{SW}:\textbf{W}^+\ar[r]\& \textbf{W}^- \\ [-20pt]
    {([(w,t)],\theta)}\ar[r,mapsto] \& {([\SW(w),t],\theta)}.
  \end{tikzcd}
\end{center} This shows us that $_0\SW$ is homotopic to $_\pi\SW$ linearly but not quaternionic-linearly. Since we are working $\Pintwo$-equivariantly, we would like a quaternionic-linear isomorphism between the two. 

We have the isomorphism 
\begin{center}
  \begin{tikzcd}[ampersand replacement = \&]
    q^{\pm}:\,_\pi W^\pm\ar[r]\& _0W^\pm \\ [-20pt]
    [(w,t)]\ar[r,mapsto] \& {[(we^{i\pi t},t)].}
  \end{tikzcd}
\end{center}
Since the multiplication is done on the right, and right multiplication commutes with left multiplication, we can see that $q^\pm$ are quaternionic-linear isomorphisms. We denote the inverses of $q^\pm$ by $\ol{q}^\pm$ and these are the maps given by right multiplication of $e^{-i\pi t}$.

If we want to compare $_0\SW$ to $_\pi \SW$, they must have same source and target. As long as we choose quaternionic-linear isomorphisms between bundles, the specific isomorphism we pick between $_0W^\pm$ and $_\pi W^\pm$ does not matter. This is because $\Sp(n)$ is 2-connected for all $n\geq 1$. One can prove this inductively looking at the long exact sequence in homotopy groups associated to the fibration $\Sp(n-1)\to \Sp(n)\to S^{4n-1}$. 

Choosing the isomorphisms to be $\ol{q}^+$ and $q^-$, we find that the composition $$q^-\circ \,_\pi \SW\circ\, \ol{q}^+:\,_0 W^+\xr{\ol{q}^+}\,_\pi W^+\xr{_\pi \SW}_\pi W^-\xr{q^-}\,_0W^-$$ allows us to witness $\FBF(\tilde{\frs}^\tau)$ as the bundle map $q^-\circ \,_\pi \SW\circ\, \ol{q}^+$  with source and target $_0W^\pm$ instead of as the bundle map $_\pi\SW$ with source and target $_\pi W^\pm$. 

However, notice $$c_0\circ\, _0\SW\circ\, \ol{c}_0 \equiv q^-\circ \,_\pi \SW\circ\, \ol{q}^+$$ elementwise. Therefore, the bundle map  $c_0\circ\, _0\SW\circ\, \ol{c}_0$ can also define the $\Pintwo$-equivariant stable homotopy class $\FBF(\tilde{\frs}^\tau)$. 
\end{proof}

From Corollary \ref{cor:main-cor}, if the two diffeomorphisms are isotopic, then $\FBF(\tilde{\frs})\simeq \FBF(\tilde{\frs}^\tau)$. 

\begin{corollary}\label{cor:conjugation-corollary}
  If $\FBF(\tilde{\frs})$ is $\Pintwo$-equivariantly stably homotopic to $\FBF(\tilde{\frs}^\tau)$, then $$_0\SW\simeq c_0\circ \,_0\SW\circ \,\ol{c}_0.$$
\end{corollary}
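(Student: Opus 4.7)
The plan is to derive this essentially tautologically from Lemma \ref{lem:prod-twisted-fbf-calc}. That lemma already identifies the bundle map $c_0\circ\,_0\SW\circ\,\ol{c}_0$, whose source and target are the bundles $_0W^+$ and $_0W^-$, as a representative of the $\Pintwo$-equivariant stable homotopy class $\FBF(\tilde{\frs}^\tau)$. By construction $_0\SW$, with the \emph{same} source and target, represents $\FBF(\tilde{\frs})$. The hypothesis of the corollary asserts the equality of these two stable homotopy classes.

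First I would unwind what ``represents'' means in this setting. Fiberwise one-point compactification of the vector bundles $_0W^\pm$ yields the Thom spaces $\Th(_0W^\pm)$, and any fiberwise bundle endomorphism-style map between $_0W^+$ and $_0W^-$ descends to a pointed map between these Thom spaces. After $\Pintwo$-equivariant stabilization by a sufficiently large representation $V\subset \calU$, the induced pointed map represents an element of $\{\Th(_0W^+),\Th(_0W^-)\}^{\Pintwo}$, and this is precisely the construction used to obtain the families Bauer--Furuta class. Since $_0\SW$ and $c_0\circ\,_0\SW\circ\,\ol{c}_0$ feed into this Thom/suspension procedure with identical source and target bundles, an equality of the resulting stable classes $\FBF(\tilde{\frs}) = \FBF(\tilde{\frs}^\tau)$ immediately upgrades to a $\Pintwo$-equivariant stable homotopy between the two induced maps of Thom spaces.

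There is no substantive obstacle, but the single point that needs verification is that the source and target bundles genuinely match. This is exactly why Lemma \ref{lem:prod-twisted-fbf-calc} was stated using the conjugation maps $c_0,\ol{c}_0$ rather than the a priori different bundles $_\pi W^\pm$: the quaternionic-linear isomorphisms $q^\pm$ between $_0W^\pm$ and $_\pi W^\pm$ have been absorbed into the statement so that both representatives live as maps $_0W^+\to{}_0W^-$. With that identification in hand, the corollary is a direct restatement of the lemma under the hypothesized equality of classes, and it functions to convert a topological hypothesis about isotopy into a concrete, testable homotopy between two explicit bundle maps, setting up the $\sfK$-theoretic analysis carried out in the remainder of the paper.
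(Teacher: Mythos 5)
Your argument is correct and is essentially the paper's intended (and unwritten) argument: the paper states this corollary without proof, treating it as immediate from Lemma \ref{lem:prod-twisted-fbf-calc}, since $_0\SW$ and $c_0\circ\,_0\SW\circ\,\ol{c}_0$ are, by construction and by that lemma, bundle maps on the \emph{same} $_0W^\pm$ whose Thomified, stabilized classes are $\FBF(\tilde\frs)$ and $\FBF(\tilde\frs^\tau)$ respectively, so the hypothesized equality of classes is precisely the claimed stable homotopy. Your emphasis on the role of the conjugation maps $c_0,\ol{c}_0$ in placing both representatives on a common source and target is exactly the right point to flag, and matches what the lemma was engineered to deliver.
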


In pursuit of a contradiction, we assume for the remainder of the paper that $\FBF(\tilde{\frs})\simeq \FBF(\tilde{\frs}^\tau)$.

\subsectionlabel{A New Hypothetical Bundle and Some Cofiber Sequences}
We can combine Lemma \ref{cor:conjugation-corollary} with Remark \ref{constr:s2-bundles} to define, for each $\Pintwo$-representation of the form $V=a\mH+2b\mtR$, a bundle map $$\calF\BF_{S^2,V} \colon E_{2\mH+V}\to E_{6\mtR+V}.$$ We can ensure that the summands of $\mtR$ are an even number by the stability of the Bauer-Furuta map; we can always smash another copy of $S^{\mtR}$ if we have an odd number. We use $(f)^+$ to denote the fiberwise one-point compactification of a vector bundle map $f:E\to F$. The map is constructed like so. First, notice the following commutative diagram,  
\begin{equation}\label{eqn:const-fbf-s2-family-map}
    \begin{tikzcd}[ampersand replacement = \&,column sep = 1.5em]
        {D^2\by S^{2\mH+V}}\ar[d,"{\uno\by\SWappsph}"]    \&      {S^1\by S^{2\mH+V}}\ar[l,hook']\ar[r,hook,"{\text{in}_0}"]\ar[d,"{(_0\SW)^+}"]     \&     {I\by S^1\by S^{2\mH+V}}\ar[d,"\uno_I\by H_V"]    \&      {S^1\by S^{2\mH+V}}\ar[l,hook',"{\text{in}_1}"']\ar[r,"{c}"]\ar[d,"{(\ol{c}\circ \,_0\SW\circ c)^+}"]     \&      {D^2\by S^{2\mH+V}}\ar[d,"{\uno\by\SWappsph}"]\\
        {D^2\by S^{6\mtR+V}}    \&     S^1\by S^{6\mtR+V}\ar[r,hook,"{\text{in}_0}"]\ar[l,hook']    \&     I\by S^1\by S^{6\mtR+V}    \&     S^1\by S^{6\mtR+V}\ar[r,"c"]\ar[l,hook',"{\text{in}_1}"']    \&      {D^2 \by S^{6\mtR+V}}
    \end{tikzcd}.
\end{equation}
Taking the colimit of the top row yields the space $E_{2\mH+V}$, similar to the  construction in Remark \ref{constr:s2-bundles}. Taking the colimit of the bottom row, we get $E_{6\mtR+V}$. Since the map from the top row to $E_{6\mtR+V}$ is well defined by the universal property of colimits, we also have a map $E_{2\mH+V}\to E_{6\mtR+V}$, and this is our map $$\calF\BF_{S^2,V}:E_{2\mH+V}\to E_{6\mtR+V}.$$

\begin{remark}
  Notice that, if $a\mH = 0$, the bundle $E_{(2b+6)\mtR}$ is isomorphic to the trivial bundle $$S^2\by S^{(2b+6)\mtR}.$$ In this case we can actually project to the $S^{(2b+6)\mtR}$ fiber via the map $$\proj_2\calF\BF_{S^2,2b\mtR}:E_{2\mH+2b\mtR}\to S^2\by S^{(2b+6)\mtR}\to S^{(2b+6)\mtR}.$$
\end{remark}\lineyspace

% See how this works from \href{https://miro.com/app/board/uXjVLRZNE5k=/}{Oct 17 Miro Board}. 

The clutching map is not $\Pintwo$-equivariant, but, after quotienting out by the left $S^1$-action, we obtain a $C_2$-equivariant map $$_{S^1\setminus}{\calF\BF_{S^2,V}} \colon ~_{S^1\setminus}{(E_{2\mH+V})}\to _{S^1\setminus}{(E_{6\mtR+V})}.$$ Since the Seiberg-Witten map sends both the zero section to the zero section and the infinity section to the infinity section, we can simplify, and deal with the quotient by these sections, \begin{equation}\label{eqn:fbfs2v}_{S^1\setminus}{\calF\BF_{S^2,V}} \colon \frac{{{_{S^1\setminus}(E_{2\mH+V})}}}{\frac{\sets{[\infty]}\by S^2}{\sets{[0]}\by S^2}}\to \frac{{{_{S^1\setminus}(E_{6\mtR+V})}}}{\frac{\sets{[\infty]}\by S^2}{\sets{[0]}\by S^2}}.\end{equation} We recognize the spaces involved as \begin{align}
  \frac{{{_{S^1\setminus}(E_{2\mH+V})}}}{\frac{\sets{[\infty]}\by S^2}{\sets{[0]}\by S^2}}&=\parenths{E_{S(\mtCP^{2a+ 3})}}_+\wedge S^{2b\mtR},& 
  \frac{{{_{S^1\setminus}(E_{6\mtR+V})}}}{\frac{\sets{[\infty]}\by S^2}{\sets{[0]}\by S^2}}&=\parenths{E_{S(\mtCP^{2a- 1})}}_+\wedge S^{(2b+6)\mtR}.\label{eqn:space-realize}
\end{align} Here we set the convention that, in the case that $a = 0$,  $\mtCP^{-1} = \ast$. 

We also recognize that we have the cofiber sequences $$(E_{\mtCP^{2(a\pm1)+1}})_+\to S^0\to \frac{{E_{S(\mtCP^{2(a\pm 1) +1})}}}{\frac{\sets{[\infty]}\by S^2}{\sets{[0]}\by S^2}}.$$ Smashing these sequences with $S^{2b\mtR}$ and $S^{(2b+6)\mtR}$ yields the cofiber sequences 
$$(E_{\mtCP^{2a+ 3}})_+\wedge S^{2b\mtR}\to S^{2b\mtR}\to (E_{S(\mtCP^{2a+3})})_+\wedge S^{2b\mtR}$$ and 
$$(E_{\mtCP^{2a- 1}})_+\wedge S^{(2b+6)\mtR}\to S^{(2b+6)\mtR}\to (E_{S(\mtCP^{2a-1})})_+\wedge S^{(2b+6)\mtR}.$$ Combining (\ref{eqn:fbfs2v}) and (\ref{eqn:space-realize}) with the above cofiber sequences yields the $C_2$-equivariant commutative diagram 
\begin{equation}\label{eqn:main-diagram-cofibers-spaces}
  \begin{tikzcd}[ampersand replacement = \&]
    (E_{\mtCP^{2a+ 3}})_+\wedge S^{2b\mtR}\ar[r,"p_+"]\& S^{2b\mtR}\ar[r,"i_+"]\ar[d,hook]\& (E_{S(\mtCP^{2a+3})})_+\wedge S^{2b\mtR}\ar[d,"{{_{S^1\setminus}\calF\BF_{S^2,V}}}"]\\
    (E_{\mtCP^{2a- 1}})_+\wedge S^{(2b+6)\mtR}\ar[r,"p_-"]\& S^{(2b+6)\mtR}\ar[r,"i_-"]\& (E_{S(\mtCP^{2a-1})})_+\wedge S^{(2b+6)\mtR}.
  \end{tikzcd}
\end{equation}

\subsectionlabel{Equivariant K-theory}
 The real representation $6\mtR$ can be identified with the complex representation $3\mtC$, and the real and complex representations of $\mH$ coincide. Therefore, we can apply $C_2$-equivariant reduced complex $\sfK$-theory to diagram (\ref{eqn:main-diagram-cofibers-spaces}) to get a long exact sequence in each row. In particular, with our map $_{S^1\setminus}(\calF\BF_{S^2,V})$, we get the following commutative diagram where the rows are exact

\begin{equation}\label{eqn:main-k-diagram}
  \begin{tikzcd}[ampersand replacement = \&]
    \tilde{\sfK}_{C_2}((E_{\mtCP^{2a+3}})_+\wedge S^{b\mtC})\& \tilde{\sfK}_{C_2}(S^{b\mtC})\ar[l,"p_+^\ast"']\&\tilde{\sfK}_{C_2}( (E_{S(\mtCP^{2a+3})})_+\wedge S^{b\mtC})\ar[l,"i_+^\ast"']\\
    \tilde{\sfK}_{C_2}((E_{\mtCP^{2a-1}})_+\wedge S^{(b+3)\mtC})\& \tilde{\sfK}_{C_2}(S^{(b+3)\mtC})\ar[l,"p_-^\ast"]\ar[u,"a_{\mtC}^3"]\& \tilde{\sfK}_{C_2}((E_{S(\mtCP^{2a-1})})_+\wedge S^{(b+3)\mtC} )\ar[l,"i_-^\ast"]\ar[u,"{(_{S^1\setminus}\calF\BF_{S^2,V})^\ast}"].
  \end{tikzcd}
\end{equation}
Here, $a^3_{\mtC}$ is the $\sfK$-theoretic Euler class induced by the inclusion $S^0\into S^{3\mtC} = S^{6\mtR}$. By exactness and commutativity, we should get that $p_+^\ast a_{\mtC}^3 i_-^\ast \equiv 0$. We will eventually proceed by calculating some of the rings in this diagram. Note that we can apply the $\sfK_{C_2}$-theoretic Euler class $a^b_{\mtC}$ to each group in the diagram to remove the copies of $S^{b\mtC}$.

\subsectionlabel{Desuspension}
Before we calculate those rings, we must note that this plan of action is dependent on the vector space $V$. As a quick aside, we show that we only need to look at the space level, i.e. when $a\mH = 0$, via a desuspension result. 

\begin{lemma}[Desuspension Lemma]{}\label{lem:desuspension}
  Let $V = a\mH+b\mtR$. Let $$f,g\in \text{Map}^G(S^1_+\wedge S^{2\mH+b\mtR}, S^{(6+b)\mtR})$$ such that $f|_{S^1_+\wedge S^{b\mtR}}= g|_{S^1_+\wedge S^{b\mtR}}$, and suppose $$\uno_{a\mH}\wedge f\simeq  \uno_{a\mH}\wedge g$$ relative to ${S^1_+\wedge S^{b\mtR}}$ in $\text{Map}^G(S^1_+\wedge S^{2\mH+V}, S^{6\mtR+V})$. Then $f\simeq g$. 
\end{lemma}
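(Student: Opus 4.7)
The plan is to recognize this lemma as a direct application of the equivariant Freudenthal suspension theorem. More precisely, I would aim to show that the suspension map
\[
\Sigma^{a\mH} \colon [S^1_+\wedge S^{2\mH+b\mtR},\, S^{(6+b)\mtR}]^{\Pintwo} \longrightarrow [S^1_+\wedge S^{2\mH+V},\, S^{6\mtR+V}]^{\Pintwo}
\]
is injective, which immediately gives $[f] = [g]$ from the hypothesis $\uno_{a\mH}\wedge f\simeq \uno_{a\mH}\wedge g$. The hypothesis that $f$ and $g$ agree on $S^1_+\wedge S^{b\mtR}$ and that the suspended homotopy is relative to this subspace is stronger than what the Freudenthal argument requires; I would simply forget the \emph{rel} hypothesis and deduce $f\simeq g$ as a free equivariant homotopy.

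The key input is a fixed-point analysis of $\mH$ as a $\Pintwo$-representation. The observation is that $(\mH)^H = 0$ for every nontrivial closed subgroup $H \leq \Pintwo$: any nontrivial element of $S^1$ acts freely on $\mH\setminus\{0\}$ by left multiplication, and $j$ likewise has no nonzero fixed vectors. Consequently $(a\mH)^H = 0$ for every nontrivial $H$, and the only subgroup contributing a genuine constraint in the equivariant Freudenthal bound is $H = \{e\}$. There the condition reads $\dim X^e \leq 2\,\mathrm{conn}(Y^e)$, which is $9+b \leq 10+2b$ (computing $\dim(S^1_+\wedge S^{2\mH+b\mtR}) = 9+b$ from $S^1_+\wedge S^n\simeq S^n\vee S^{n+1}$, and $\mathrm{conn}(S^{6+b})=5+b$), and this holds for $b\geq 0$. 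For all other subgroups $H$, the relevant mixed bound takes the form $\dim X^H \leq \mathrm{conn}(Y^e) + \mathrm{conn}(Y^H)$: for $H\subseteq S^1$ nontrivial, $X^H = S^1_+ \wedge S^{b\mtR}$ has dimension $1+b$ and $Y^H = S^{(6+b)\mtR}$ has connectivity $5+b$, so $1+b \leq 2(5+b)$ holds; for $H$ containing $j$, one has $Y^H=\{*\}$ with infinite connectivity, so the bound is automatic.

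The main obstacle is being precise about the statement of the equivariant Freudenthal theorem, whose sharpest versions (as in Lewis--May--Steinberger or tom Dieck's \emph{Transformation Groups}) involve connectivity hypotheses indexed by pairs $K < H$ of subgroups whose $W$-fixed parts differ. Because $\mH$ has no fixed vectors away from the identity subgroup, all those mixed conditions collapse to the single check at $H=\{e\}$ together with automatic checks at every other $H$. The theorem then yields a bijection, and in particular injectivity, of $\Sigma^{a\mH}$ on the relevant $\Pintwo$-equivariant homotopy classes, completing the argument.
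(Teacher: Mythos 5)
Your approach is genuinely different from the paper's. The paper does not invoke the equivariant Freudenthal suspension theorem as a black box; it instead runs equivariant obstruction theory directly (following tom Dieck II.3), setting up the relative pairs $(\hat X,\hat A)$ and $(\hat X_V,\hat A_V)$, proving an intermediate comparison isomorphism $\phi^{k,n}$ between the Bredon obstruction groups via the Thom isomorphism and the \emph{nonequivariant} Freudenthal theorem on the coefficient systems, and then lifting the homotopy skeleton by skeleton. The key geometric input the paper exploits is exactly the one you identified: $X/A$ is built from \emph{free} $\Pintwo$-cells, which is equivalent to your observation that $\mH^H=0$ for every nontrivial closed $H\leq\Pintwo$. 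Your route, if carried out carefully, is cleaner and more conceptual; the paper's route is, in effect, a from-scratch verification of the Freudenthal-type statement in this special case, which buys the reader certainty that no compact-Lie-group subtleties (orbit dimension shifts, which version of the mixed conditions indexed by pairs $K<H$ with $V^K\neq V^H$) are being swept under the rug. You correctly flag the latter as your main risk; before citing equivariant Freudenthal for $\Pintwo$ you would need to pin down the exact statement for compact Lie groups and confirm the bound does not acquire a $\dim G/H$ correction that your computation omits. One small factual slip: for $H$ meeting the $j$-coset you claim $Y^H=\{*\}$ with infinite connectivity, but in fact $Y^H=(S^{(6+b)\mtR})^H=S^0$, whose connectivity is $-1$; your inequality $\dim X^H \leq \mathrm{conn}(Y^e)+\mathrm{conn}(Y^H)$ becomes $1\leq (5+b)+(-1)=4+b$, which still holds, so the conclusion is unaffected. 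Your observation that the ``rel $S^1_+\wedge S^{b\mtR}$'' hypothesis can be dropped is correct — it strengthens the assumption without strengthening the conclusion — though the paper retains it because the inductive lifting argument naturally begins on the subcomplex $\hat A$ where the two maps already agree.
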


The proof of this result uses the methods of \cite[II.3]{Dieck+1987} and is inspired by the result in \cite[2.1]{BirgitSchmidt2003Spin4A}. The key facts used are from equivariant obstruction theory. The obstruction to lifting a map $f$ from the $n$-skeleton to the $n+1$ skeleton relative to the $n-1$ skeleton is $c^{n+1}(f)$ in the exact sequence \cite[II.3.10]{Dieck+1987} $$[X_{n+1},Y]\to \text{Image}([X_n,Y]\to [X_{n-1},Y])\xr{c^{n+1}}\sfH^{n+1}_G(X,A;\pi_nY).$$ The difference cochain between two such lifts is $$\gamma(f,g)\in \sfH^n(X,A;\pi_n Y)\xleftrightarrow{\text{bij.}}[X_n,Y]^G_{\text{rel }A}$$ in \cite[II.3.17]{Dieck+1987}.

\begin{proof}
We define the spaces $(X,A) := (S^1_+\wedge S^{2\mH+b\mtR}, S^1_+\wedge S^{b\mtR})$ and $(X_V,A_V):=(X\wedge S^{a\mH}, A\wedge S^{a\mH})$. Observe that $X$ is obtained from $A$ by attaching free $\Pin(2)$-cells of dimensions up to $\Pin(2)\by D^{8+b}$.  We set $(\hat{X},\hat{A}) := (X,A)\by (I,\dee I)$ and $(\hat{X}_V,\hat{A}_V) := (X_V,A_V)\by (I,\dee I)$. Written alternatively, this is 
\begin{align}
      (\hat X,\hat A)&:=(X\by I, (A\by I)\cup (X \by \dee I)), \text { and } \\
      (\hat X_V,\hat A_V)&:=(X_V\by I, (A_V\by I)\cup (X_V \by \dee I)).
\end{align}
We also define $Y := S^{(6+b)\mtR}$ and $Y_V := S^{6\mtR+a\mH+b\mtR}$. 

The hypothesis gives us maps $H^{(-1)}:\hat{A}\to Y$, $H_V:\hat X_V\to Y_V$, where $H^{(-1)} = f\sqcup g$ on $\dee I$ and is the stated homotopy along $A\by I$ and $H_V$ is the stated homotopy between $\uno_{a\mH}\wedge f$ and $\uno_{a\mH}\wedge g$. We would like to extend $H^{(-1)}$ to the entirety of $\hat{X}$ as this will prove our result. 

We will first need a lemma to transport information between $H^{(k)}$ and $H_V$. 
\begin{lemma}
    Let $k>0$, $n\leq 2\text{Conn}(Y)-1$, and $\pi_1(Y)\acts \pi_n(Y)$ trivially. Then there is a homomorphism
    $$\phi^{k,n}:\sfH^{k}_G(\hat{X},\hat{A};\pi_n Y)\xr{\cong} \sfH^{k+\abs{V}}_G(\hat X_V,\hat A_V;\pi_{n+\abs{V}}Y_V)$$ which is an isomorphism for $n\leq 2\text{Conn}(Y)-2$ and a surjection for $n = 2\text{Conn}(Y)-1$. 
\end{lemma}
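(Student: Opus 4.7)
The plan is to build $\phi^{k,n}$ as a composition of two natural maps: an equivariant suspension (Thom) isomorphism shifting the cohomological degree by $|V|$, and a change-of-coefficients induced by the classical Freudenthal suspension map $\sigma : \pi_n Y \to \pi_{n+|V|} Y_V$. The first is an isomorphism unconditionally, so the claimed range on $n$ is inherited directly from the range of $\sigma$ given by Freudenthal's theorem.

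Concretely, I would first change the coefficients via $\sigma$ to produce $\sfH^k_G(\hat X, \hat A; \pi_n Y) \to \sfH^k_G(\hat X, \hat A; \pi_{n+|V|} Y_V)$, which is an isomorphism for $n \leq 2\,\mathrm{Conn}(Y) - 2$ and a surjection for $n = 2\,\mathrm{Conn}(Y) - 1$ by the classical Freudenthal suspension theorem. The hypothesis that $\pi_1 Y$ acts trivially on $\pi_n Y$ ensures the coefficient $\Pintwo$-modules involved are trivial, so this change-of-coefficients makes sense with constant coefficients. Next, I would apply the equivariant Thom isomorphism for the trivial bundle $a\mH \to \ast$, which yields $$\sfH^k_G(\hat X, \hat A; M) \xrightarrow{\cong} \sfH^{k+|V|}_G(\hat X_V, \hat A_V; M)$$ for any abelian group $M$ with trivial $\Pintwo$-action. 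This is where the choice of $a\mH$ (as opposed to, say, $a\mtR$) is crucial: $a\mH$ is quaternionic, hence complex, hence $\Pintwo$-orientable, providing a canonical Thom class. To justify reducing to the free-cell setting of \cite{Dieck+1987}~II.3, I would also verify that $(\hat X, \hat A)$ and $(\hat X_V, \hat A_V)$ are relative free $\Pintwo$-CW pairs, using that $\mH$ is free away from the origin so that smashing a free cell with $S^{a\mH}$ preserves diagonal-freeness off the basepoint. Composing the two maps gives $\phi^{k,n}$ with the claimed range.

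The main technical obstacle is the equivariant Thom identification itself: even though $a\mH$ is complex-orientable, the representation sphere $S^{a\mH}$ has mixed orbit type (a fixed point at $0$, a basepoint at $\infty$, and free $\Pintwo$-action elsewhere), so a careful cellular induction is needed to produce a Thom class that implements a $|V|$-fold degree shift in relative Bredon cohomology. This reduces, by skeletal induction on $(\hat X, \hat A)$, to the absolute case $\tilde{\sfH}^*_G(S^{a\mH}; M) \cong M$ concentrated in degree $|V|$, which follows from the orientability of $a\mH$ as a complex $\Pintwo$-representation together with the free action on $a\mH\setminus\{0\}$. Once the Thom isomorphism is secured, the Freudenthal range governs the conclusion of the lemma exactly as stated; if one tried to run the same argument with $a\mH$ replaced by a non-orientable real $\Pintwo$-representation, this step would break and force a reformulation in $\sfK\sfO$-theory or with twisted coefficients.
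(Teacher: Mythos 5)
Your proposal matches the paper's own proof essentially step for step: both factor $\phi^{k,n}$ as (equivariant Thom isomorphism implementing the degree shift by $\abs{V}$) $\circ$ (change of coefficients along the Freudenthal suspension $\pi_n Y \to \pi_{n+\abs{V}} Y_V$), with the range of $n$ governed entirely by Freudenthal. The only cosmetic difference is the order of composition — you apply the coefficient change first and Thom second, while the paper routes through $\tilde{\sfH}^k_G(\hat X/\hat A)$, applies Thom (smashing with $S^V$), and does the coefficient change last — but since the two operations act independently on the coefficient module and on the pair, the composite is the same. Your added remarks on why $a\mH$ being a complex (hence $\Pintwo$-orientable) representation is what secures the Thom class, and on preserving the free relative $\Pintwo$-CW structure under smashing with $S^{a\mH}$, are correct elaborations of points the paper handles by citation rather than argument.
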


For $k\geq 0$, $\sfH^k_G(X,A;G)$ is the cohomology group defined in \cite[II.3]{Dieck+1987}.

\begin{proof}
    Consider the sequence of homomorphisms 
    \begin{align*}
        \sfH^{k}_G(\hat{X},\hat{A};\pi_n Y)\cong \tilde{\sfH}^k_G(\hat{X}/\hat{A};\pi_n Y)\xrightarrow[\text{Thom}]{\cong}&\tilde{\sfH}^{k+\abs{V}}_G(\hat{X}/\hat{A}\wedge S^V;\pi_nY)\\
        &\cong \sfH^{k+\abs{V}}_G(\hat X_V,\hat A_V;\pi_nY)\xrightarrow{\text{Freud.}}\sfH^{k+\abs{V}}_G(\hat X_V,\hat A_V;\pi_{n+\abs{V}}Y_V),
    \end{align*}
    where we combine the natural isomorphism $\sfH^k(X,A)\cong \tilde{\sfH}^k(X/A)$ with the equivariant Thom isomorphism \cite{costenoble2013equivariantordinaryhomologycohomology,lewis1986equivariant}
    % Note that for E = V x B, Th(E) = S^V\wedge B_+.
    and the Freudenthal suspension theorem, which states $\pi_k(S^n)\cong \pi_{k+m}(S^{n+m})$ when $k\leq 2n-2$ and is a surjection when $k = 2n-1$. 
\end{proof}
Using this, we will proceed inductively. We are able to lift $H^{(-1)}$ from the $(-1)$-skeleton to $H^{(0)}$ on the $0$-skeleton since the gluing maps are simply taking the disjoint union with free $G$-cells of dimension 0. We can also show that $H^{(-1)}\wedge \uno_{a\mH}= H_V^{(\abs{a\mH}-1)}$. These two maps are equal to each other when restricted to $\hat{A}\cup (X_V\by \dee I).$  Notice $\hat{A}_V$ is obtained from $\hat{A}\cup (X_V\by \dee I)$ by attaching free cells of up to dimension $1+b+4a$. On the other hand, the target of these maps is $Y_V$, which is a sphere of dimension $6+4a+b$. Since $\pi_{k}(S^{6+4a+b}) = 0$ for any $k<6+4a+b$, there is no obstruction to the homotopy between the maps $H^{(-1)}\wedge \uno_{a\mH}$ and $H_V^{(\abs{a\mH}-1)}$. After applying the homotopy extension property, we may homotope $H_V$ such that $H^{(-1)}\wedge \uno_{a\mH}$ is actually equal to $H_V^{(\abs{a\mH}-1)}$. 

Suppose we have lifted to $H^{(k)}$ such that $H^{(k)}\wedge \uno_{a\mH}= H_V^{(k+\abs{a\mH})}$. We can lift $H^{(k)}$ to the $(k+1)$-skeleton if $c^{k+1}(H^{(k)}) = 0\in \sfH^{k+1}(\hat{X},\hat{A};\pi_{k}Y)$. By the existence of $H_V$, we know $H_V^{(k+1+\abs{V})}$ is a lift of $H_V^{(k+\abs{V})}$, so $c^{k+1+\abs{V}}(H_V^{k+\abs{V}})=0$, meaning there is a lift of $H^{(k)}$ by using our isomorphism $(\phi^{k+1,k})\inv$. Denote this lift by $H_0^{(k+1)}$. See  $$d:=d(H_0^{(k+1)}\wedge\uno_V,H_V^{(k+1+\abs{V})})\in \sfH^{k+1+\abs{V}}(\hat{X}_V,\hat{A}_V;\pi_{k+1+\abs{V}}Y_V).$$ Set $g:=(\phi^{k+1,k+1})\inv(d)\in \sfH^{k+1}(\hat{X},\hat{A};\pi_{k+1}Y)$. We choose $H_1^{(k+1)}$ such that $d(H_0^{(k+1)},H_1^{(k+1)}) = g$. Then $H_1^{(k+1)}\wedge \uno_V = H_V^{(k+1+\abs{V})}$, and we let $H^{(k+1)}$ be $H_1^{(k+1)}$. 

This induction works so long as the homomorphism from Freudenthal is surjective. Since our homomorphisms are isomorphisms for $k = 0, \dots, 8+b$ and a surjection at $k = 9+b$, this is still well-defined, and we have such lifts. 

Therefore, we have shown there is a map $H:(\hat{X},\hat{A})\to Y$ exhibiting a homotopy between $f$ and $g$. 
% Now, Suppose we've been able to lift $H$ to the $k$ skeleton with our lift $H^{(k)}\wedge \uno_V\simeq H_V$. Observe that $c^{k+1+\abs{V}}(H^{(k)}\wedge \uno_V)=0$ since $H\wedge \uno_V$ exists, and use the $\phi^{k+1,k}$ to get a lift of $H^{(k)}$.
% If there are multiple lifts of $H^{(k)}$, say $H_0^{(k+1)}$ and $H_1^{(k+1)}$, then $$d:=[d(H_0^{(k+1)}|_{X^{(k)}},H_1^{(k+1)})|_{X^{(k)}}]\in \sfH^k(X,A;\pi_kY)\xrightarrow[\phi^{k,k}]{\cong}\sfH^{k+\abs{V}}(X_V,A_V;\pi_{k+\abs{V}}Y_V.)$$
\end{proof}
This allows us to work specifically with $a\mH=0$, so we can work on the $\sfK$-theory in the diagram 

\begin{center}
  \begin{tikzcd}[ampersand replacement = \&]
    \tilde{\sfK}_{C_2}((E_{\mtCP^{3}})_+)\& \tilde{\sfK}_{C_2}(S^0)\ar[l,"p_+^\ast"']\&\tilde{\sfK}_{C_2}( (E_{S(\mtCP^{3})})_+)\ar[l,"i_+^\ast"']\\
    \tilde{\sfK}_{C_2}(S^{3\mtC})\& \tilde{\sfK}_{C_2}(S^{3\mtC})\ar[l,"p_-^\ast"]\ar[u,"a_{\mtC}^3"]\& \tilde{\sfK}_{C_2}(S^{3\mtC} )\ar[l,"i_-^\ast"]\ar[u,"{_{S^1\setminus}\calF\BF_{S^2,0}^\ast}"].
  \end{tikzcd}
\end{center}

\subsectionlabel{K-theory Calculations}
With that aside, we do calculations in $\sfK$-theory. First, we record the $\sfK$-theory of $\mRP^n$. For a nice treatment, see the course notes by Randall-Williams \cite[\S5.5]{Randall-Williams-Char-Class-K-theory-classnotes}. We record the groups from those notes on $\sfK$-theory groups of $\mRP^n$ for the reader. %For $\mRP^{2n+1}$, it is worth noting the map from the $(n+1)$-sphere to a unit sphere bundle over $\mCP^n$,  $$S^{2n+1}\to S(\gamma^{n}_{\mtC}\oby \gamma^{n}_{\mtC}),\quad v\mapsto v\oby v.$$ This factors through $\mRP^{2n+1}$, and is in fact an homeomorphism. The induced map $\mRP^{2n+1}\to \mCP^n$ pulls back the tautological bundle of $\mCP^n$ to the complexified tautological bundle of $\mRP^n$. 
\begin{align*}
  \sfK^0(\mRP^{2n+1})&= \mZ\oplus \mZ/2^n&\sfK^1(\mRP^{2n+1})&= \mZ\\
  \sfK^0(\mRP^{2n})&= \mZ\oplus\mZ/2^n&\sfK^1(\mRP^{2n})&= 0.
\end{align*}

By the relationships in the short exact sequence $0\to S^1\to \Pintwo\to C_2\to 0,$ and properties of equivariant $\sfK$-theory, collected in \cite[\S2]{manolescu2014intersection} in reference to \cite{atiyah2018k,Segal}, we obtain the following lemma.

\begin{lemma}\label{lem:K theory of BPin(2)^4n-2}
  $\tilde{\sfK}_{\Pintwo}(S(n\mH)_+)\cong \sfR(\Pintwo)/(z^n) = \frac{\mZ[z,w]}{(z^n,zw-2w, w^2-2w) }$. 

  Consequently, this implies $\tilde{\sfK}_{C_2}(\mtCP^{2n-1}_+)\cong \tilde{\sfK}(B\Pintwo^{(4n-2)}_+) \cong \tilde{\sfK}_{\Pintwo}(S(n\mH)_+)$. As groups, this is of type $\mZ^n\oplus \mZ/2^n$ and as $\mZ$-modules, all of these groups in cohomological degree 0 are isomorphic to $$\mZ\sets{1,z,\dots, z^{n-1}}\oplus \mZ/2^n\sets{w}.$$ In particular, the torsion subgroup is cyclic of order $2^n$. 
\end{lemma}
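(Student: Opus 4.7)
The plan is to apply reduced $\Pintwo$-equivariant $\sfK$-theory to the cofiber sequence
\[
S(n\mH)_+ \;\longrightarrow\; D(n\mH)_+ \;\longrightarrow\; S^{n\mH}
\]
arising from the pair $(D(n\mH), S(n\mH))$, where the Thom space of $n\mH$ over a point is $S^{n\mH}$, and $D(n\mH)_+ \simeq S^0$ $\Pintwo$-equivariantly because the disk is contractible. Since $n\mH$ is a complex $\Pintwo$-representation, equivariant Bott periodicity guarantees that $\tilde{\sfK}_{\Pintwo}^\ast$ of every term is concentrated in even degrees, so the long exact sequence breaks into short exact sequences. By the equivariant Thom isomorphism, $\tilde{\sfK}_{\Pintwo}(S^{n\mH}) \cong \sfR(\Pintwo)$ with the pullback along $D(n\mH)_+ \to S^{n\mH}$ corresponding to multiplication by the $\sfK$-theoretic Euler class $\lambda_{-1}(n\mH) = z^n$.

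Next I would verify that $z^n$ acts injectively on $\sfR(\Pintwo)$. Using the relations $w^2 = 2w$ and $zw = 2w$, an additive $\mZ$-basis of $\sfR(\Pintwo)$ is $\{1, z, z^2, \ldots\} \cup \{w\}$; multiplication by $z$ sends $z^k \mapsto z^{k+1}$ and $w \mapsto 2w$, so an element $a_0 + a_1 z + \cdots + a_k z^k + bw$ with $z$-multiple equal to zero must have every $a_i$ and $b$ equal to zero. Hence $z$ (and therefore $z^n$) is a non-zero-divisor, and the short exact sequence collapses to the isomorphism
\[
\tilde{\sfK}_{\Pintwo}(S(n\mH)_+) \;\cong\; \sfR(\Pintwo)/(z^n) \;=\; \frac{\mZ[z,w]}{(z^n,\; zw - 2w,\; w^2 - 2w)}.
\]
Reading off the additive structure: the generators $1, z, \ldots, z^{n-1}$ remain $\mZ$-free, while the relation chain $z^n w = 2^n w \equiv 0$ and the check that $2^k w$ is not in the ideal $(z^n)$ for $k < n$ forces $w$ to generate a cyclic summand of order exactly $2^n$. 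This yields the claimed group structure $\mZ^n \oplus \mZ/2^n$ with $\mZ$-module presentation $\mZ\{1, z, \ldots, z^{n-1}\} \oplus \mZ/2^n\{w\}$.

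For the ``consequently'' clause, I would use that $\Pintwo$ acts freely on $S(n\mH)$ (as $S^1 \subset \Pintwo$ acts freely on $S(n\mH) \subset \mH^n$), together with the standard identification $\sfK_G(X) \cong \sfK(X/G)$ for a free $G$-action. Taking the quotient by the full group $\Pintwo$ gives $\tilde{\sfK}_{\Pintwo}(S(n\mH)_+) \cong \tilde{\sfK}(S(n\mH)/\Pintwo)_+)$, and the skeletal description $p^{-1}(\mHP^{n-1}) = B\Pintwo^{(4n-2)}$ recorded earlier identifies $S(n\mH)/\Pintwo$ with $B\Pintwo^{(4n-2)}$. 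Quotienting only by $S^1 \subset \Pintwo$ instead leaves a free residual $C_2 = \Pintwo/S^1$ action on $S(n\mH)/S^1 = \mtCP^{2n-1}$, and the analogous change-of-groups isomorphism gives $\tilde{\sfK}_{C_2}(\mtCP^{2n-1}_+) \cong \tilde{\sfK}_{\Pintwo}(S(n\mH)_+)$.

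The main obstacle I anticipate is the bookkeeping in the zero-divisor check, since the ring $\sfR(\Pintwo)$ has both torsion-free and torsion-receptive behavior in its $w$-component; everything else is a straightforward assembly of the Thom isomorphism, the free-action identification $\sfK_G(X) \cong \sfK(X/G)$, and the cofiber sequence of the unit-disk/unit-sphere pair.
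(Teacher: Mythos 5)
Your proposal is correct and follows essentially the same route as the paper: both apply reduced $\Pintwo$-equivariant $\sfK$-theory to the cofiber sequence of the pair $(D(n\mH), S(n\mH))$, identify the resulting map with multiplication by $z^n = \lambda_{-1}(n\mH)$ via the Thom isomorphism, verify that $z^n$ is a non-zero-divisor in $\sfR(\Pintwo)$, and then pass to $\sfK_{C_2}(\mtCP^{2n-1}_+)$ and $\sfK(B\Pintwo^{(4n-2)}_+)$ using the freeness of the action and the change-of-groups isomorphism. One small imprecision: Bott periodicity alone does not force concentration in even degrees; you also need the standard vanishing $\sfK^{-1}_G(\text{pt}) = 0$ for a compact Lie group $G$, which together with the Thom isomorphism handles $S^0$ and $S^{n\mH}$, and then the injectivity of $z^n$ closes the sequence and gives the odd-degree vanishing for $S(n\mH)_+$.
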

\begin{proof}
    Notice that $S(\infty\mH)$ is an $E\Pintwo$, and the quotient by $\Pintwo$ is a $B\Pintwo$. By abusing notation, we will call $S(\infty\mH)/\Pintwo$ $B\Pintwo$. Then note that $S(n\mH)/\Pintwo$ is the $4n-2$ skeleton of $B\Pintwo$, which we denote $B\Pintwo^{(4n-2)}$. 
    % Restricting to the $(4n-2)$-skeleton of $B\Pintwo$ and taking the preimage, we get $S(n\mH)\subset E\Pintwo.$ 
    Also note that from \cite[Fact 2.3]{manolescu2014intersection}, $\sfK(B\Pintwo^{(4n-2)})\xr{\sim}\sfK_{\Pintwo}(S(n\mH))$. 

    There is a cofiber sequence $$S^0\to S^{n\mH}\to \Sigma S(n\mH)_+.$$ Applying $\tilde{\sfK}_{\Pintwo}$-theory to this \cite[2.9]{manolescu2014intersection}, we get the long exact sequence 
    \begin{center}
      \begin{tikzcd}[ampersand replacement = \&]
        \tilde{\sfK}^0_{\Pintwo}(S^0)\ar[d]\& \tilde{\sfK}^0_{\Pintwo}(S^{n\mH})\ar[l]\&\tilde{\sfK}^0_{\Pintwo}(\Sigma S(n\mH)_+)\ar[l]\\
        \tilde{\sfK}^{-1}_{\Pintwo}(\Sigma S(n\mH)_+)\ar[r]\& \tilde{\sfK}^{-1}_{\Pintwo}(S^{n\mH})\ar[r]\& \tilde{\sfK}^{-1}_{\Pintwo}(S^0)\ar[u]
      \end{tikzcd}.
    \end{center}
    After applying \cite[2.8,2.14]{manolescu2014intersection} we get the information
    \begin{center}
        \begin{tikzcd}[ampersand replacement = \&]
          \sfR(\Pintwo)\ar[d]\& \sfR(\Pintwo)\ar[l,"z^n\cdot(-)"]\&\tilde{\sfK}^0_{\Pintwo}(\Sigma S(n\mH)_+)\ar[l]\\
          \tilde{\sfK}^{-1}_{\Pintwo}(\Sigma S(n\mH)_+)\ar[r]\& 0\ar[r]\& 0\ar[u]
        \end{tikzcd}.
      \end{center}
    Since the map $z^n\cdot(-)$ is injective and the map down on the left is surjective, and using \cite[2.8]{manolescu2014intersection} once again, we get $$\tilde{\sfK}^i_{\Pintwo}(S(n\mH)_+) = \begin{cases}
        \frac{\mZ[z,w]}{(zw-2w,w^2-2w,z^n)}&i\text{ even},\\
        0&i\text{ odd}\\
    \end{cases}.$$
    Looking just at the group structure, we get that the group structure of $$\sfK^0(B\Pintwo^{(4n-2)})\cong \mZ^{n}\oplus \mZ/2^n.$$
\end{proof}

With this lemma in hand, we can work through the calculations for the main diagram (\ref{eqn:main-k-diagram}). We wanted to calculate everything with $C_2$-equivariant $\sfK$-theory, but we can also calculate after taking the space-level quotient by the $C_2$-action on the left. This yields bundles $E_{B\Pintwo^{(6)}}$ over $S^2$. We can also form the bundle $E_{\mHP^1}\cong S^2\by \mHP^1$ since $\Sp(1)$ (and $\Sp(n)$, for that matter) is simply-connected. Now, there is a fiber bundle $\mRP^2\into E_{B\Pintwo^{(6)}}\downto E_{\mHP^1}\cong S^2\by \mHP^1$. Taking pullbacks along $S^2\by \ast\xr{\ell} S^2\by \mHP^1\xleftarrow{r} \ast\by \mHP^1$, we get bundles 
\begin{center}
  $\begin{tikzcd}[ampersand replacement = \&]
      \mRP^2\ar[r,hook] \& \ell^\ast E_{B\Pintwo^{(6)}}\ar[d]\ar[r,equal]\&[-15pt] \!\!:F_{S^2}\\
      \& S^2
    \end{tikzcd}$ and $\begin{tikzcd}[ampersand replacement = \&]
      \mRP^2\ar[r,hook] \&  r^\ast E_{B\Pintwo^{(6)}}\ar[d]\ar[r,equal]\&[-15pt] B\Pintwo^{(6)}\\
      \& \mHP^1.
    \end{tikzcd}$
\end{center} 

\begin{sseqdata}[grid = chess,name = EBPin2-fs2-fiber,cohomological Serre grading, yscale = .4]
\foreach \x in {0,4}{
    \foreach \y in {0,...,4}{
        \class (\x,{2*\y})
    }
}
\end{sseqdata}

\begin{sseqdata}[grid = chess,name = EBPin2-fhp1-fiber,cohomological Serre grading, yscale = .4,xscale = 1.5,classes = {draw = none}]
\foreach \x in {0,2}{
    \foreach \y in {0,...,4}{
        \class["\mZ^2\oplus \mZ/4"] (\x,{2*\y})
    }
}
\end{sseqdata}

\begin{sseqdata}[grid = chess,name = EBPin2-rp2-fiber,cohomological Serre grading, yscale = .4,xscale = 1.5,classes ={draw = none},x range = {-1}{6}]
\foreach \x in {0,2,4,6}{
    \foreach \y in {0,1,2,3,4}{
        \class["\mZ\oplus \mZ/2"] (\x,{2*\y})
    }
}
\end{sseqdata}

\begin{sseqdata}[grid = chess,name = fs2,cohomological Serre grading, yscale = .4,xscale = 1,classes ={draw = none},x range = {-1}{6}]
    \foreach \x in {0,2}{
        \foreach \y in {0,1,2,3}{
            \class["\mZ\oplus \mZ/2"] (\x,{2*\y})
        }
    }
\end{sseqdata}
\begin{sseqdata}[grid = chess,name = fhp1,cohomological Serre grading, yscale = .4,xscale = 1.5,classes ={draw = none},x range = {-1}{6}]
    \foreach \x in {0,4}{
        \foreach \y in {0,1,2,3}{
            \class["\mZ\oplus \mZ/2"] (\x,{2*\y})
        }
    }
\end{sseqdata}

Our goal is to calculate $\tilde{\sfK}((E_{B\Pintwo^{(6)}})_+)$ and we will use the Atiyah-Hirzebruch spectral sequence to do so. The form of this spectral sequence is given by $$\xE_2^{p,q} = \sfH^p(X;\sfK^q(F))\Rightarrow \sfK^{p+q}(E),$$ where $F\to E\downto X$ is a Serre fibration.

Calculating $\tilde{\sfK}((F_{S^2})_+)$ and $\tilde{\sfK}(B\Pintwo^{(6)}_+)$ and importing the calculations from their spectral sequences will make this easier. 

$\sfK(B\Pintwo^{(6)})$ is easy from Lemma \ref{lem:K theory of BPin(2)^4n-2}, giving $\tilde{\sfK}(B\Pintwo^{(6)}_+) \cong \mZ\oplus \mZ/4$. 

Putting this calculation with the spectral sequence below gives us the following $\text{E}_\infty$-page with the extension denoted by $b'$.

  \begin{equation}\label{eqn:sseq-bpin26}
  \raisebox{-1in}{\begin{sseqpage}[name = fhp1, page = 0, x range = {-1}{6}, y range = {-1}{6}]
      \class(0,8) \class(4,-2) \class (4,-4)
      \structline[thick,wongdarkblue,"b'"] (0,4) (4,0)
      \structline[thick,wongdarkblue] (0,6) (4,2)
      \structline[thick,wongdarkblue] (0,2) (4,-2)
      \structline[thick,wongdarkblue] (0,0) (4,-4)
      \structline[thick,wongdarkblue] (0,8) (4,4)
  \end{sseqpage}}
\end{equation}
\begin{center}
  AHSS for the $\sfK$-theory of the total space in the fibration $\mRP^2\into {B\Pintwo^{(6)}}\downto \mHP^1$. 
\end{center}

The bundle $\mRP^2\into F_{S^2}\downto S^2$ needs its own work. 
\begin{lemma}
  $F_{S^2}$ is stably equivalent to $S^2\vee \mRP^4$
\end{lemma}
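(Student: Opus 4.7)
The plan is to produce a section $s$ of $F_{S^2}\to S^2$ from the global fixed point of the clutching, use the retract $p\circ s=\uno$ to split off $S^2$ stably, and then identify the remaining quotient as a Thom space equal to $\mRP^4$.

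First, I identify $F_{S^2}$ with the projectivization $\mP(H\oplus\mR)$ of the rank-three real bundle $H\oplus\mR$ over $S^2=\mCP^1$, where $H$ denotes the Hopf bundle (tautological complex line, viewed as real rank two). Since $\mRP^2$-bundles over $S^2$ are classified by $\pi_1\Diff(\mRP^2)=\pi_1\SO(3)=\mZ/2$, it suffices to verify that the clutching of $F_{S^2}$ represents the nontrivial class. The clutching $c_\theta=\sqrt{z}\cdot(-)\cdot\overline{\sqrt{z}}$, after descent to the fiber $\mRP^2=S(\mH)/\Pintwo$, acts on $\mCP^1=S(\mH)/S^1$ as the rotation $[h_1:h_2]\mapsto[h_1:zh_2]$ and hence similarly on $\mRP^2=\mCP^1/C_2$. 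As a loop $S^1\to\SO(3)$ this represents the generator of $\pi_1\SO(3)$, so $F_{S^2}\cong\mP(H\oplus\mR)$. The two fixed points $[1:0],[0:1]\in\mCP^1$ are exchanged by the $j$-action and hence descend to a single fixed point in each $\mRP^2$ fiber. Via the Hopf embedding $\mCP^1\hookrightarrow S^2\subset H\oplus\mR$, $(z_1,z_2)\mapsto (2z_1\bar z_2,\,|z_1|^2-|z_2|^2)$, both $[1:0]$ and $[0:1]$ map to the $\pm\mR$-direction, i.e., to $\mP(\mR)\subset\mP(H\oplus\mR)$. Thus the section $s\colon S^2\to F_{S^2}$ produced by the fixed point is precisely $\mP(\mR)$, and the retract $p\circ s=\uno$ yields
\[
F_{S^2}\simeq_s S^2\vee F_{S^2}/s(S^2).
\]

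Second, I identify $F_{S^2}/s(S^2)$ as a Thom space. By the standard projective-completion description, $\mP(H\oplus\mR)\setminus \mP(\mR)$ is the total space of the dual tautological line bundle $\gamma^\ast$ over $\mP(H)$: a line $\ell\subset \spa(v)\oplus\mR$ distinct from $\mR$ is the graph of a unique linear map $\spa(v)\to\mR$, i.e., an element of $\gamma^\ast_{[v]}$. Since $\mP(\mR)$ supplies the point at infinity in each fiber, collapsing it yields
\[
F_{S^2}/s(S^2)\cong \Th(\gamma^\ast_{\mP(H)}).
\]
Now $\mP(H)=S(H)/\{\pm 1\}=S^3/\{\pm 1\}=\mRP^3$, and $\gamma^\ast_{\mP(H)}\cong\gamma_{\mP(H)}$ (as real line bundles) restricts to the tautological $\gamma_1$ on each fiber $\mRP^1\subset\mRP^3$, hence is nontrivial. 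Since $H^1(\mRP^3;\mZ/2)=\mZ/2$ classifies real line bundles, this forces $\gamma_{\mP(H)}\cong\gamma_3$, the tautological line over $\mRP^3$. Finally, $\Th(\gamma_3)=\mRP^4/\mRP^0=\mRP^4$ (pointed), by the classical formula $\Th(k\gamma_n)=\mRP^{n+k}/\mRP^{k-1}$. Combining with the stable splitting gives $F_{S^2}\simeq_s S^2\vee\mRP^4$.

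The most delicate step is the first paragraph: tracing through the clutching to identify $F_{S^2}$ with $\mP(H\oplus\mR)$ and matching the fixed-point section with $\mP(\mR)$. Once these identifications are made, the Thom space analysis and the classification of real line bundles over $\mRP^3$ are routine.
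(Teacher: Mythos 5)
Your proof is correct and follows essentially the same route as the paper: identify $F_{S^2}$ as a projectivized rank-three bundle over $S^2$ via the clutching, produce a section from the global fixed point to split off $S^2$ stably, and recognize the cofiber as the Thom space of the tautological line over $\mRP^3$, hence $\mRP^4$. The paper pins down the bundle through a more explicit clutching computation (identifying $E_{\mtCP^1}$ as $S(\mathcal{O}(1)_\mR\oplus\ul{\mR})$ and its $\mRP^1$-subbundle as having Euler number $2$), where you instead invoke the classification of $\mRP^2$-bundles over $S^2$ by $\pi_1\SO(3)=\mZ/2$; and your projective-completion argument spells out the Thom-space identification that the paper only asserts.
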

\begin{proof} 
  Note that $E_{\mtCP^1}\subset E_{\mtCP^3}$ when viewing $\mCP^1$ as the subspace $$\sets{[\alpha:\beta:\gamma:\delta]\in \mCP^3\mid \gamma,\delta=0}.$$ Notice that $E_{\mtCP^1}$ is an $S^2$-bundle over $S^2$. We also see that, under the clutching map \begin{center}
    \begin{tikzcd}[ampersand replacement = \&]
      \ctheta:S^1\by \mtCP^1\ar[r]\& S^1\by \mtCP^1 \\ [-20pt]
      {(z,[a:b])}\ar[r,mapsto] \& {(z,[a:zb])}
    \end{tikzcd},
  \end{center} for each $z\in S^1$, there are two fixed points on $\mtCP^1$ under the map, namely $[1:0]$ and $[0:1]$. For the rest of the points in $\mtCP^1$, the clutching function is the same as the clutching map that defines the bundle ${\calO(1)}_{\mR}$ over $\mCP^1$. So together, this allows us to see that $E_{\mtCP^1}$ is the sphere bundle of the real vector bundle $S({\calO(1)}_{\mR}\oplus\ul{\mR})$ with a left $C_2$-action. 
  
  When we take the quotient out by the $C_2$-action, we get the bundle $E_{\mRP^2}$, the real projectivization of $\mP({\calO(1)}_{\mR}\oplus\ul{\mR})$. When we restrict on fibers to $\mRP^1$, i.e. $E_{\mRP^1}\subset E_{\mRP^2}$, we get a circle bundle over $S^2$ whose defining clutching map, since we quotiented by $C_2$, has degree 2, meaning that the first Euler class associated to this circle bundle is given by $2$. The bundle over $S^2$ with that class is $TS^2 = \mRP^3$, so we know that $_{C_2\setminus} E_{\mCP^3}$ contains $\mRP^3$ as a sub-skeleton.

  Using the Leray-Hirsch theorem, we get $\mZ/2$-cohomology of $\mP({\calO(1)}_{\mR}\oplus\ul{\mR})$ (as well as an incomplete picture of the Steenrod module structure of the cohomology in Remark \ref{rmk:steenrod-mod}). Grothendieck's characterization of Stiefel-Whitney classes tells us  \begin{equation}\sfH^\ast(\mP({\calO(1)}_{\mR}\oplus\ul{\mR});\mZ/2):=\frac{\sfH^\ast(S^2;\mZ/2)\anglebracket{1,\gamma,\gamma^2}}{\gamma^3 = w_3+w_2\gamma +w_1\gamma^2} = \frac{\mZ/2[s,\gamma]}{\gamma^3 = s\gamma}, ~~~\begin{matrix}
    \abs{s}=2\\
    \abs{\gamma} = 1
  \end{matrix}.\label{eqn:cohomology-of-projectivization}\end{equation} We know that $w_1({\calO(1)}_{\mR}\oplus\ul{\mR})$ is 0 since $\mR$ is trivial, and ${\calO(1)}_{\mR}$ is orientable. We also know that $w_2({\calO(1)}_{\mR}\oplus\ul{\mR}) =1$ by the Whitney product and the fact that $w_2({\calO(1)}_{\mR})$ is the mod 2 reduction of the first Chern class $c_1({\calO(1)}_{\mR}) = 1$. Lastly, $w_3({\calO(1)}_{\mR}\oplus\ul{\mR}) = 0$ since the dimension and the Whitney sum formula tell us that $w_3({\calO(1)}_{\mR}) = w_3(\ul{\mR}) = 0$. 

  Notice that there is a section $S^2\xr{\sigma} F_{S^2}$, where we pick $[0:0:1]\in \mRP^2$ for each fiber. This is possible since this is fixed under the clutching map $\ctheta$. Since there is a section, the projection map is a retract, which means  $F_{S^2}\simeq \text{Cof}(\sigma)\vee S^2$ stably \cite[2.4.21]{Malkiewich2023}. The cofiber of $\sigma$ is homotopy equivalent to $\Th(\text{Taut}(\mRP^3)) = \mRP^4$, meaning that our space is, in fact, stably homotopy equivalent to $S^2\vee \mRP^4$. 
\end{proof} 

\begin{remark}\label{rmk:steenrod-mod}
  One could also check that the Steenrod module structure of this space is the same as $\mRP^4\vee S^2$. This intuition is what led to the conclusion of the argument, so we include the ideas here. 

  The cohomology ring at equation (\ref{eqn:cohomology-of-projectivization}) gives us partial information about the Steenrod module structure, of which a picture is given below. 
\begin{center}
    $\underset{E_{\mRP^1}}{\begin{tikzpicture}[main node/.style={circle,draw, minimum size = .1, inner sep = 1pt,fill},scale=.5]
          \node[main node] at (0,0)  (0) {};
          \node[main node] at (0,1)  (1) {};
          % \node[main node] at (0,2)  (2) {};
          % \node[main node] at (0,4)  (4) {};
          \node[main node] at (1,2)  (2a) {};
          \node[main node] at (1,3)  (3a) {};
          % \node[main node] at (1,4)  (4a) {};
          \draw (-1.5,0) node {$0$};
          \draw (-1.5,1) node {$1$};
          \draw (-1.5,2) node {$2$};
          \draw (-1.5,3) node {$3$};
          % \draw (-1.5,4) node {$4$};
          % \path (1) edge[bend right = 0] node [right] {} (2);
          % \path (3a) edge[bend right = 0] node [right] {} (4a);
          \path (1) edge[bend right = 0](2a) node[right] {\tiny$\Sq^1$};
          % \path (2) edge[wongorange] node[below] {$\eta$} (4a);
  \end{tikzpicture}}\raisebox{1cm}{$\implies$}$ 
  $\underset{\text{ partial }E_{\mRP^2}}{\begin{tikzpicture}[main node/.style={circle,draw, minimum size = .1, inner sep = 1pt,fill},scale=.5]
          \node[main node] at (0,0)  (0) {};
          \node[main node] at (0,1)  (1) {};
          \node[main node] at (0,2)  (2) {};
          % \node[main node] at (0,4)  (4) {};
          \node[main node] at (1,2)  (2a) {};
          \node[main node] at (1,3)  (3a) {};
          \node[main node] at (1,4)  (4a) {};
          \draw (-1.5,0) node {$0$};
          \draw (-1.5,1) node {$1$};
          \draw (-1.5,2) node {$2$};
          \draw (-1.5,3) node {$3$};
          % \draw (-1.5,4) node {$4$};
          \path (1) edge[bend right = 0] node [right] {} (2);
          \path (3a) edge[bend right = 0] node [right] {} (4a);
          \path (1) edge[bend right = 0](2a) node[right] {\tiny$\Sq^1$};
          % \path (2) edge[wongorange] node[below] {$\eta$} (4a);
  \end{tikzpicture}}\raisebox{1cm}{$\iff $}$
  $\underset{\text{change of basis partial }E_{\mRP^2}}{\begin{tikzpicture}[main node/.style={circle,draw, minimum size = .1, inner sep = 1pt,fill},scale=.5]
          \node[main node] at (0,0)  (0) {};
          \node[main node] at (0,1)  (1) {};
          \node[main node] at (0,2)  (2) {};
          % \node[main node] at (0,4)  (4) {};
          \node[main node] at (1,2)  (2a) {};
          \node[main node] at (1,3)  (3a) {};
          \node[main node] at (1,4)  (4a) {};
          \draw (-1.5,0) node {$0$};
          \draw (-1.5,1) node {$1$};
          \draw (-1.5,2) node {$2$};
          \draw (-1.5,3) node {$3$};
          % \draw (-1.5,4) node {$4$};
          % \path (1) edge[bend right = 0] node [right] {} (2);
          \path (3a) edge[bend right = 0] node [right] {} (4a);
          \path (1) edge[bend right = 0](2a) node[right] {\tiny$\Sq^1$};
          % \path (2a) edge[bend right = 20] n (4a);
  \end{tikzpicture}}\raisebox{1cm}{$\begin{matrix}+\\\sfH^\ast(\mP({\calO(1)}_{\mR}\oplus\mtR))\\\Sq^2\gamma^2 = \gamma^4\\ = s\gamma^2\end{matrix}\implies$}$
  $\underset{\text{Full }E_{\mRP^2}}{\begin{tikzpicture}[main node/.style={circle,draw, minimum size = .1, inner sep = 1pt,fill},scale=.5]
          \node[main node] at (0,0)  (0) {};
          \node[main node] at (0,1)  (1) {};
          \node[main node] at (0,2)  (2) {};
          % \node[main node] at (0,4)  (4) {};
          \node[main node] at (1,2)  (2a) {};
          \node[main node] at (1,3)  (3a) {};
          \node[main node] at (1,4)  (4a) {};
          \draw (-1.5,0) node {$0$};
          \draw (-1.5,1) node {$1$};
          \draw (-1.5,2) node {$2$};
          \draw (-1.5,3) node {$3$};
          % \draw (-1.5,4) node {$4$};
          % \path (1) edge[bend right = 0] node [right] {} (2);
          \path (3a) edge[bend right = 0] node [right] {} (4a);
          \path (1) edge[bend right = 0](2a) node[right] {\tiny$\Sq^1$};
          \path (2a) edge[bend right = 20] node[right]{\tiny$\Sq^2$}(4a);
  \end{tikzpicture}}$

\end{center}
  The final picture gives us an idea that $E_{\mRP^2}$ could, in fact, be $S^2\vee \mRP^4$.
\end{remark}

\lineyspace

Since $\sfK$-theory is a stable-homotopy invariant, $\tilde{\sfK}(\mRP^4)= \mZ/4$, and $\tilde{\sfK}(S^2)=\mZ$, we get the following corollary.
\begin{corollary}
  $\tilde{\sfK}^i((F_{S^2})_+)\cong \tilde{\sfK}^i(S^2)\oplus \tilde{\sfK}^i(\mRP^4) = \begin{cases}
    \mZ\oplus \mZ/4 & i \text{ even}\\
    0 & i\text{ odd}
  \end{cases}$ 
\end{corollary}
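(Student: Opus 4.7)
The plan is to invoke the preceding lemma as a black box and reduce everything to standard K-theory computations already recorded in the paper. Since complex topological K-theory is a stable homotopy invariant, and by the preceding lemma $F_{S^2}$ is stably equivalent to $S^2 \vee \mRP^4$, we immediately identify $\tilde{\sfK}^i((F_{S^2})_+)$ with $\tilde{\sfK}^i((S^2 \vee \mRP^4)_+)$. Using that reduced K-theory sends wedges to direct sums (which is the statement that $\tilde{\sfK}$ is a reduced cohomology theory applied to the cofiber sequence $\ast \to S^2 \vee \mRP^4 \to S^2 \vee \mRP^4$), this splits as $\tilde{\sfK}^i(S^2) \oplus \tilde{\sfK}^i(\mRP^4)$.

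Next I would plug in the two known pieces. By Bott periodicity, $\tilde{\sfK}^0(S^2) = \mZ$ (generated by $[H]-1$ for $H$ the tautological line bundle) and $\tilde{\sfK}^1(S^2) = 0$. For the other summand I would specialize the table recorded just above the lemma to $n=2$: $\sfK^0(\mRP^4) = \mZ \oplus \mZ/4$ and $\sfK^1(\mRP^4) = 0$, so after peeling off the copy of $\mZ$ coming from the point one gets $\tilde{\sfK}^0(\mRP^4) = \mZ/4$ and $\tilde{\sfK}^1(\mRP^4) = 0$. Assembling the two pieces yields $\mZ \oplus \mZ/4$ in even degrees and $0$ in odd degrees, which is exactly the claim.

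There is essentially no hard step here; the entire content of the corollary sits in the preceding lemma's stable splitting $F_{S^2} \simeq S^2 \vee \mRP^4$. If anything needed to be checked carefully it would be the usual bookkeeping that the stable equivalence is a \emph{based} equivalence (so that $\tilde{\sfK}^i$ can be applied on both sides), but this is already built into the construction via the section $\sigma\colon S^2 \to F_{S^2}$ and the Thom-space identification of $\operatorname{Cof}(\sigma)$ with $\mRP^4$, so no further argument is required.
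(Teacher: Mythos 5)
Your proof follows the paper's own (essentially unwritten) argument: the paper provides no detailed proof of this corollary, merely noting that it follows from stable homotopy invariance of $\sfK$-theory, the stable splitting $F_{S^2}\simeq_s S^2\vee\mRP^4$ established in the preceding lemma, and the recorded groups $\tilde{\sfK}(S^2)=\mZ$ and $\tilde{\sfK}(\mRP^4)=\mZ/4$. You fill in exactly those details, so the approach matches.

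One bookkeeping point worth flagging, which you inherit from the corollary's statement rather than introduce yourself. As literally written, $\tilde{\sfK}^0((F_{S^2})_+)=\sfK^0(F_{S^2})$, and the lemma's splitting together with the AHSS in equation (\ref{eqn:sseq-fs2}) gives $\sfK^0(F_{S^2})\cong\mZ^2\oplus\mZ/4$, not $\mZ\oplus\mZ/4$; correspondingly, $(S^2\vee\mRP^4)_+\simeq S^0\vee S^2\vee\mRP^4$, so your middle step is missing a $\tilde{\sfK}^i(S^0)$ summand. Dropping the subscript $+$ throughout, so the corollary reads $\tilde{\sfK}^i(F_{S^2})\cong\tilde{\sfK}^i(S^2)\oplus\tilde{\sfK}^i(\mRP^4)$, makes both the corollary and your argument consistent; this discrepancy has no downstream effect, since only the torsion summand $\mZ/4$ is used in Lemma \ref{lem:k-torsion-total-space}. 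Separately, the cofiber sequence you cite as the source of additivity, $\ast\to S^2\vee\mRP^4\to S^2\vee\mRP^4$, is garbled; what you want is either the wedge axiom directly, or the split cofiber sequence $S^2\to S^2\vee\mRP^4\to\mRP^4$.
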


Putting this corollary together with the AHSS for $F_{S^2}$, we get the following $\text{E}_\infty$-page with the extension denoted by $a'$. 

\begin{equation}\label{eqn:sseq-fs2}
    \raisebox{-1in}{\begin{sseqpage}[name = fs2, page = 0, x range = {-1}{4}, y range = {-1}{6},xscale = 1.5]
        \class (2,-2)
        \structline[thick,wongdarkblue,"a'"'] (0,2) (2,0)
        \structline[thick,wongdarkblue] (0,4) (2,2)
        \structline[thick,wongdarkblue] (0,6) (2,4)
        \structline[thick,wongdarkblue] (0,0) (2,-2)
    \end{sseqpage}}
\end{equation}
\begin{center}
  AHSS for the $\sfK$-theory of the total space in the fibration $\mRP^2\into F_{S^2}\downto S^2$.
\end{center}

With all of this we are able to calculate the $\sfK$-theory of $(E_{B\Pintwo^{(6)}})_+$. 
\begin{lemma}\label{lem:k-torsion-total-space}
  The torsion part of $\tilde{\sfK}((E_{B\Pintwo^{(6)}})_+)$ is isomorphic to $\mZ/8\oplus \mZ/2$. 
\end{lemma}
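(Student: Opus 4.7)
The plan is to compute $\tilde\sfK((E_{B\Pintwo^{(6)}})_+)$ via the Atiyah--Hirzebruch spectral sequence (AHSS) for the fiber bundle $B\Pintwo^{(6)}\into E_{B\Pintwo^{(6)}}\downto S^2$ arising from the clutching construction in Remark \ref{constr:s2-bundles} with $F = B\Pintwo^{(6)}$. By Lemma \ref{lem:K theory of BPin(2)^4n-2} applied with $n=2$, the fiber has $\sfK^q(B\Pintwo^{(6)})\cong \mZ^2\oplus\mZ/4$ for $q$ even and is zero otherwise, so tensoring with $\sfH^p(S^2)\cong\mZ$ for $p\in\{0,2\}$ places a copy of $\mZ^2\oplus\mZ/4$ at every nonzero $E_2^{p,q}$ position of the spectral sequence.

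Because both the base cohomology and the fiber $\sfK$-theory are concentrated in even total degree, a parity check shows that every differential $d_r\colon E_r^{p,q}\to E_r^{p+r,q-r+1}$ must land in a zero group (odd column when $r$ is odd, odd row when $r$ is even), so $E_2=E_\infty$. Reading off the diagonal $p+q=0$ gives a short exact sequence
$$0\to \sfK^{-2}(B\Pintwo^{(6)})\to \tilde\sfK((E_{B\Pintwo^{(6)}})_+)\to \tilde\sfK(B\Pintwo^{(6)}_+)\to 0,$$
whose torsion part is an extension $0\to\mZ/4\to T\to\mZ/4\to 0$. Since $\Ext^1(\mZ/4,\mZ/4)\cong\mZ/4$, the group-theoretic candidates for $T$ are $\mZ/16$, $\mZ/8\oplus\mZ/2$, and $\mZ/4\oplus\mZ/4$, corresponding respectively to the extension class being a generator of $\mZ/4$, an element of order $2$, or zero.

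The principal obstacle is pinning down which extension occurs. My plan is to use naturality with the two previously computed sub-spectral sequences: $F_{S^2}$ and $B\Pintwo^{(6)}$ sit inside $E_{B\Pintwo^{(6)}}$ as the pullbacks over $S^2\by\ast$ and $\ast\by\mHP^1$ respectively. From the analysis culminating in (\ref{eqn:sseq-fs2}) we have $\tilde\sfK(F_{S^2})\cong\mZ\oplus\mZ/4$, and from (\ref{eqn:sseq-bpin26}) we have $\tilde\sfK(B\Pintwo^{(6)}_+)\cong\mZ\oplus\mZ/4$. Both restriction maps $\tilde\sfK(E_{B\Pintwo^{(6)}})\to\tilde\sfK(F_{S^2})$ and $\tilde\sfK(E_{B\Pintwo^{(6)}})\to\tilde\sfK(B\Pintwo^{(6)}_+)$ respect the AHSS filtrations. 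I would pick a lift $\tilde w\in\tilde\sfK(E_{B\Pintwo^{(6)}})$ of the torsion generator $w\in\sfK(B\Pintwo^{(6)})$, then chase $\tilde w$ through both restrictions: the fact that $w$ already has order $4$ in $\sfK(B\Pintwo^{(6)})$ rules out the trivial $\mZ/4\oplus\mZ/4$, while the fact that the restriction to $F_{S^2}$ only witnesses an order-$4$ element (not an order-$16$ element) should rule out $\mZ/16$, leaving $T\cong\mZ/8\oplus\mZ/2$.

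If this naturality argument turns out to be insufficient on its own, a complementary multiplicative approach is available: writing $\tilde w = 1 - L$ for a line bundle $L$ on $E_{B\Pintwo^{(6)}}$ that restricts fiberwise to $\mtC$, the fiber-level identity $L^2=1$ in $\sfK(B\Pintwo^{(6)})$ lifts to $L^2-1\in F^2$, so $\tilde w^2 = 2\tilde w + \lambda$ with $\lambda$ supported in AHSS filtration degree $2$. A direct inspection of how $\ctheta$ twists $L$ across the equator of $S^2$ should identify $\lambda$ with exactly $2$ times the Bott-class-times-$w$ generator of $E_\infty^{2,-2}$, forcing $4\tilde w\neq 0$ while $8\tilde w=0$, and so again $T\cong\mZ/8\oplus\mZ/2$. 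I expect the AHSS collapse and the enumeration of candidate extensions to be routine, with the real work lying in this extension resolution.
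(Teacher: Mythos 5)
Your approach differs from the paper's: you filter $E_{B\Pintwo^{(6)}}$ by the fibration $B\Pintwo^{(6)}\into E_{B\Pintwo^{(6)}}\downto S^2$, whereas the paper uses the finer decomposition $\mRP^2\into E_{B\Pintwo^{(6)}}\downto S^2\by\mHP^1$. Your $\xE_2$-collapse by parity is correct and the enumeration of the three possible torsion groups is fine, but the part of the argument that actually selects $\mZ/8\oplus\mZ/2$ is where the proposal fails. The naturality chase you propose does not distinguish the candidates: the fact that $w$ has order 4 in $\sfK(B\Pintwo^{(6)})$ is perfectly consistent with $T\cong\mZ/4\oplus\mZ/4$ (a lift $\tilde w$ could simply have order 4 there), and the restriction $T\to T(F_{S^2})\cong\mZ/4$ admits surjections from all three candidates, so ``only witnessing an order-4 element in $F_{S^2}$'' does not rule out $\mZ/16$. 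There is also an earlier unaddressed point: writing the torsion as an extension $0\to\mZ/4\to T\to\mZ/4\to 0$ presumes the torsion generator $w$ of the base of your filtration lifts to a torsion element of $\sfK(E_{B\Pintwo^{(6)}})$; this is true, but requires an argument (e.g.\ note that $L$ is the complexification of the real line bundle classifying the double cover $E_{\mtCP^3}\to E_{B\Pintwo^{(6)}}$, so $L^2=1$, hence $\tilde w^2=2\tilde w$ and nilpotence of $\tilde\sfK$ forces $\tilde w$ to be $2$-power torsion).

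Your fallback multiplicative idea is closer to something that could work, but the proposed identification $\lambda=2\beta w$ is not right: precisely because $L$ is the complexification of a real line bundle on all of $E_{B\Pintwo^{(6)}}$, one gets $L^2=1$ on the nose, i.e.\ $\lambda=0$, after which the relation $\tilde w^n=2^{n-1}\tilde w$ alone does not decide between $4\tilde w=0$ and $4\tilde w\neq 0$. The missing input is precisely what the paper's finer $\mRP^2$-over-$S^2\by\mHP^1$ filtration supplies: a four-step filtration whose successive torsion quotients are all $\mZ/2$, letting the two known extensions (labelled $a$, $b$, coming multiplicatively from the sub-spectral-sequences for $F_{S^2}$ and for $B\Pintwo^{(6)}\downto\mHP^1$) be tracked cell-by-cell from $(6,0)$ down to $(0,6)$ and producing the order-$8$ class directly. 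With your coarser two-step filtration the single extension class in $\Ext^1(\mZ/4,\mZ/4)$ cannot be read off from the two restriction maps alone; you would need to either pass to the finer filtration or compute $4\tilde w\in F^2$ by a genuinely new argument, and neither is carried out.
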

\begin{proof}
  Consider the bundle $\mRP^2\into E_{B\Pintwo^{(6)}}\downto E_{\mHP^1}\cong S^2\by \mHP^1$. The isomorphism is because $\pi_1(\Sp(1)) = 0$, meaning that any $\mHP^1$-bundle over $S^2$ is trivial. We can calculate the $\sfK$-theory via the Atiyah-Hirzebruch spectral sequence. For degree reasons, the spectral sequence collapses at $\xE_2$. We show the $\xE_2 = \xE_\infty$ page below. 
    \begin{equation}\label{eqn:sseq-ebpin26}
      \raisebox{-1in}{\begin{sseqpage}[name = EBPin2-rp2-fiber,page =0, x range = {-1}{7}, y range = {-1}{9}]
        \circleclasses[rounded rectangle](0,6)(6,0)
        \structline[bend right = 20,ultra thick, wongdarkblue,"b" very near end] (0,6) (4,2)
        \structline[bend left = 20,ultra thick, wongdarkblue,"a"] (0,6) (2,4)
        \structline[bend left = 20,ultra thick, wongdarkblue] (4,2) (6,0)
      \end{sseqpage}}
    \end{equation}

  \begin{center}
    $\sfH^x(\underbrace{S^2\by \mHP^1}_{E_{\mHP}};\sfK^y(\mRP^2))\implies \sfK^\ast(E_{B\Pintwo^{(6)}})$
  \end{center}
  From the inclusions $S^2\by \ast\xr{\ell} S^2\by \mHP^1$ and $\ast\by \mHP^1\xr{r} S^2\by \mHP^1$, there are inclusions $F_{S^2}\into E_{B\Pintwo^{(6)}}$ and $F_{\mHP^1}\into E_{B\Pintwo^{(6)}}$. Using the spectral sequences (\ref{eqn:sseq-bpin26}) and (\ref{eqn:sseq-fs2}), we can import the information into the spectral sequence (\ref{eqn:sseq-ebpin26})

  By functoriality of the Atiyah-Hirzebruch spectral sequence, this implies the two extensions in the diagram labeled as $a,b$. Since this $\xE_2$-page is both an $\sfH^\ast(F_{S^2};\sfK^\ast(\mRP^2))$-module and a $\sfH^\ast(F_{\mHP^1};\sfK^\ast(\mRP^2))$-module, we are able to import the extensions from the AHSS of $F_{S^2}$ and $F_{\mHP^1}\cong B\Pintwo^{(6)}$, which can be seen by comparing the different structures of $E_{B\Pintwo^{(6)}}$ to these different decompositions of $F_{S^2}$ and $F_{\mHP^1}$. If we pick the element $1$ in $\mZ/2$ at $(6,0)$ in the $\text{E}_\infty$ page, and follow the extension, we see that in $(4,2)$, the element is nonzero after multiplication by two. And in $(0,6)$, that same element is also nonzero after multiplication by 2 again. To be explicit, passing from $(6,0)$ through  the extensions to $(0,6)$, we include the structure of the extensions inductively. 
  \begin{align*}
    \mZ\oplus \mZ/2 \tag{(0,6)}\\
    \mZ^2\oplus \mZ/4 \tag{(0,6)-(4,2)}\\
    \mZ^3\oplus \mZ/4\oplus \mZ/2 \tag{(0,6)-(2,4)}\\
    \mZ^4\oplus \mZ/8\oplus \mZ/2 \tag{(0,6)-(6,0)}.
  \end{align*}
  Therefore, there must be an element of order 8, meaning $$\sfK^0(E_{B\Pintwo^{(6)}})\cong \mZ^4\oplus \mZ/8\oplus \mZ/2.$$
\end{proof}

\sectionlabel{Proof of Main Theorem}
With this in place, we can prove our main theorem. 
\begin{theorem*}
  The boundary Dehn twist is not isotopic to the identity in $\pi_0(\Diff_\dee(X^\circ))$. 
\end{theorem*}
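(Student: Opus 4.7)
The plan is to argue by contradiction. Suppose $\delta^\circ \simeq \uno$ in $\Diff_\dee(X^\circ)$. By the corollary in Section~\ref{sec:Application}, this hypothesis forces
\[
\FBF^{\Pintwo}(T_\uno, \tilde{\frs}) \;=\; \FBF^{\Pintwo}(T_\uno, \tilde{\frs}^\tau) \quad \text{in} \quad \sets{S^1_+ \wedge S^{2\mH},\, S^{6\mtR}}^{\Pintwo}.
\]
First, for each $\Pintwo$-representation $V = a\mH + 2b\mtR$, I would feed a $\Pintwo$-equivariant stable homotopy between the two invariants into the clutching construction of Remark~\ref{constr:s2-bundles} to produce the hypothetical bundle map $\calF\BF_{S^2,V}\colon E_{2\mH+V} \to E_{6\mtR+V}$, and then invoke the Desuspension Lemma (Lemma~\ref{lem:desuspension}) to reduce the problem to $V = 0$.

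Next, I would apply $\tilde{\sfK}_{C_2}$-theory to the commutative diagram of cofiber sequences~\ref{eqn:main-diagram-cofibers-spaces} with $V = 0$, obtaining the ladder~\ref{eqn:main-k-diagram} whose rows are long exact. Commutativity of the right-hand square combined with exactness of the top row yields the key algebraic constraint
\[
p_+^\ast \circ a_{\mtC}^3 \circ i_-^\ast \;=\; p_+^\ast \circ i_+^\ast \circ ({_{S^1\setminus}\calF\BF_{S^2,0}})^\ast \;=\; 0.
\]

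The final step is to exhibit an element on which this constraint is violated. The Euler class is $\lambda_{-1}(3\mtC) = \tilde{w}^3 = 4\tilde{w}$ in $\sfR(C_2) = \mZ[\tilde{w}]/(\tilde{w}^2 - 2\tilde{w})$, so $a_{\mtC}^3$ acts as multiplication by $4\tilde{w}$ (up to a Bott generator). Using the free $S^1$-action identification of Lemma~\ref{lem:K theory of BPin(2)^4n-2} together with the free $C_2$-quotient, we get $\tilde{\sfK}_{C_2}((E_{\mtCP^3})_+) \cong \tilde{\sfK}((E_{B\Pintwo^{(6)}})_+)$, whose torsion is $\mZ/8 \oplus \mZ/2$ by Lemma~\ref{lem:k-torsion-total-space}. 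Although $4w = 0$ already in the fiber $\tilde{\sfK}(B\Pintwo^{(6)}_+)$ (where the torsion is only $\mZ/4$), the Atiyah-Hirzebruch extension promoting $\mZ/4$ to $\mZ/8$ over the $S^2$-base means that the corresponding element $p_+^\ast(4\tilde{w})$ does not vanish in the total space. After tracing a generator $y$ through $i_-^\ast$ to realize $4\tilde{w}$ in the image of $a_{\mtC}^3 \circ i_-^\ast$, this nonvanishing contradicts the constraint above, showing that no such isotopy can exist.

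The main obstacle is the numerical verification that $p_+^\ast(4\tilde{w})$ is genuinely the order-$8$ class (or at least has order strictly greater than $4$) rather than dying in a lower-order summand. This requires tracking the augmentation-ideal generator $\tilde{w}$ across the Atiyah-Hirzebruch filtration of the spectral sequence~\ref{eqn:sseq-ebpin26} and identifying its lift with a class whose multiplication-by-$4$ is detected by the nontrivial extension across the columns of the $\xE_\infty$-page. Equivalently, one must confirm compatibility between the equivariant Euler-class multiplication and the AHSS filtration after passing to the non-equivariant quotient, which is where the subtle interplay between the fiber $\mZ/4$-relation and the base-level Bott class produces the decisive order-$8$ obstruction.
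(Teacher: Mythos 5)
Your proposal follows essentially the same route as the paper: argue by contradiction, desuspend to $V=0$ via the Desuspension Lemma, apply $\tilde{\sfK}_{C_2}$ to the cofiber ladder (\ref{eqn:main-k-diagram}) to get the constraint $p_+^\ast a_{\mtC}^3 i_-^\ast = 0$, and identify the Euler class $a_{\mtC}^3 = \tilde{w}^3 = 4\tilde{w}$ in $\sfR(C_2)$. The one step you flag yourself as an obstacle — verifying that $4\, p_+^\ast(\tilde{w}) \neq 0$ in $\tilde{\sfK}_{C_2}((E_{\mtCP^3})_+) \cong \tilde{\sfK}((E_{B\Pintwo^{(6)}})_+)$, i.e.\ that $p_+^\ast(\tilde{w})$ has order at least $8$ rather than landing in a $\mZ/4$ or $\mZ/2$ piece — is exactly what remains, and it genuinely needs an argument beyond the bare torsion computation of Lemma \ref{lem:k-torsion-total-space}, since a priori $p_+^\ast(\tilde{w})$ could map into the $\mZ/2$ summand or into the index-two subgroup of the $\mZ/8$.

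The paper closes this gap by factoring through the fiber: composing $\mtCP^1 \into \mtCP^3 \into E_{\mtCP^3} \to \ast$ shows that
$\sfR(C_2) = \sfK_{C_2}(\ast) \xrightarrow{p_+^\ast} \sfK_{C_2}(E_{\mtCP^3}) \to \sfK_{C_2}(\mtCP^1) = \sfK(\mRP^2)$
is the ordinary restriction $\sfR(C_2) \to \sfK(\mRP^2)$, which is surjective because $\sigma$ pulls back to the complexified tautological bundle; so $\tilde{w} = 1-\sigma$ maps to the generator $\gamma_{\mRP^2}\otimes\mC - 1$ of $\tilde{\sfK}(\mRP^2) \cong \mZ/2$. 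The map $\sfK(E_{B\Pintwo^{(6)}}) \to \sfK(\mRP^2)$ is the edge map to filtration zero in the AHSS (\ref{eqn:sseq-ebpin26}), and the chain of extensions established in Lemma \ref{lem:k-torsion-total-space} shows that any class restricting to this generator must have order $8$. Hence $p_+^\ast(\tilde{w})$ has order $8$, so $4\, p_+^\ast(\tilde{w}) \neq 0$, contradicting the constraint. Your heuristic about the ``interplay between the fiber $\mZ/4$-relation and the base-level Bott class'' is in the right spirit, but the actual verification needs this restriction-to-$\mRP^2$ argument combined with the explicit extension structure, not just an appeal to compatibility of the Euler class with the AHSS filtration.
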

\nond We reproduce diagram (\ref{eqn:main-k-diagram}) here for the reader with $V =0$. 
\begin{center}
  \begin{tikzcd}[ampersand replacement = \&]
    \tilde{\sfK}_{C_2}((E_{\mtCP^{3}})_+)\& \tilde{\sfK}_{C_2}(S^0)\ar[l,"p_+^\ast"']\&\tilde{\sfK}_{C_2}( (E_{S(\mtCP^{3})})_+)\ar[l,"i_+^\ast"']\\
    \tilde{\sfK}_{C_2}(S^{3\mtC})\& \tilde{\sfK}_{C_2}(S^{3\mtC})\ar[l,"p_-^\ast"]\ar[u,"a_{\mtC}^3"]\& \tilde{\sfK}_{C_2}(S^{3\mtC} )\ar[l,"i_-^\ast"]\ar[u,"{_{S^1\setminus}\calF\BF_{S^2,0}^\ast}"].
  \end{tikzcd}
\end{center}
\begin{proof}
  Suppose $\calF\BF_{S^2,V}$ does exist, and we desuspend it to $\calF\BF_{S^2,0}$ via Lemma \ref{lem:desuspension}. % This is allowed since $a>0,$ and we can also choose $b=0$ since the $E_{S(\mtCP^3)}$ is a 9 dimensional space, so the homotopy induces a surjection by the lemma. 
  Then the $C_2$-equivariant diagram above satisfies $p_+^\ast a_{\mtC}^3i_-^\ast\equiv 0$. 

  Lemma \ref{lem:k-torsion-total-space} gives the $\sfK_{{C_2}}$ groups as the direct sum of $\mZ$-modules $$\tilde{\sfK}_{C_2}((E_{\mtCP^3})_+) = \sfK(E_{B\Pintwo^{(6)}}) = \mZ^4\oplus \mZ/8\oplus \mZ/2.$$ The composition of maps $$\mtCP^1\into \mtCP^3\into E_{\mtCP^3}\to \ast$$ is identical to the map $\mtCP^1\to \ast$. Applying $\sfK_{C_2}$ to the map yields $$\sfR(C_2)=\sfK_{C_2}(\ast)\to \sfK_{C_2}(\mtCP^1) = \sfK(\mRP^2).$$ This map in $\sfK$-theory is surjective since $1$ is mapped to the trivial bundle over $\mRP^2$ and $\sigma$ is mapped to the complexification of the tautological bundle over $\mRP^2$. The second claim can be seen from the fact that, in $C_2$-spaces, we take the pullback 
  \begin{center}
    \begin{tikzcd}[ampersand replacement = \&]
      \mtCP^1\by \sigma\ar[r]\ar[dr,phantom,"\lrcorner"very near start]\ar[d]\& \sigma\ar[d]\\
      \mtCP^1\ar[r]\&\ast
    \end{tikzcd}.
  \end{center}
  let us view $\mtCP^1$ as $S^2$ with a free left $C_2$-action. If we take the quotient of $S^2\by \sigma$ by the $C_2$ action, there is a bundle isomorphism \begin{center}
    \begin{tikzcd}[ampersand replacement = \&]
      _{C_2\setminus} ({S}^2\by \sigma) \ar[r]\& \gamma_{\mRP^2}\otimes\mC \\ [-20pt]
      [(v,z)]\ar[r,mapsto] \& v\otimes z.
    \end{tikzcd}
  \end{center}
  This is well-defined since $(v,z)$ and $(-v,-z)$ are both sent to $v\otimes z$ by using the tensor product. We see  $\sfK(\mRP^2) = \mZ\oplus \mZ/2$, where the first summand is generated by the trivial bundle and the second summand is generated by the virtual bundle $\gamma_{\mRP^2}\oby \mC-1$.

  Therefore, the map in $\sfK_{C_2}$-theory is surjective. Putting this in the full composition, we get the three diagrams below which all contain the same information. 
  \begin{center}
    \fbox{\parbox{5in}{\begin{tikzcd}[ampersand replacement = \&]
      \sfK_{C_2}(\mtCP^1)\ar[from=r]\& \sfK_{C_2}(\mtCP^3)\ar[from=r] \& \sfK_{C_2}(E_{\mtCP^3})\ar[from=r] \&\sfK_{C_2}(\ast)%\ar[lll,bend left = 20,two heads]
    \end{tikzcd}

      \begin{tikzcd}[ampersand replacement = \&]
        \sfK(\mRP^2)\ar[from=r]\& \sfK(B\Pintwo^{(6)})\ar[from=r] \& \sfK(E_{B\Pintwo^{(6)}})\ar[from=r] \&\sfK(\ast)\otimes \sfR(C_2)%\ar[lll,bend left = 20,two heads]
      \end{tikzcd}

      \begin{tikzcd}[ampersand replacement = \&]
        \mZ\oplus \mZ/2\ar[from=r]\& \mZ^2\oplus\mZ/4\ar[from=r] \& \mZ^4\oplus \mZ/8\oplus \mZ/2\ar[from=r] \&\mZ^2%\ar[lll,bend left = 20,two heads]
    \end{tikzcd} }}
  \end{center}

  The composition of maps $\sfK_{C_2}(\ast)\to \sfK_{C_2}(E_{\mtCP^3})\to \sfK_{C_2}(\mtCP^1)$ is surjective on the group on the right, so we hope to understand the map $$\sfK_{C_2}(E_{\mtCP^3}) = \sfK(E_{B\Pintwo^{(6)}})\to \sfK(\mRP^2) = \sfK_{C_2}(\mtCP^1).$$ Note that the skeleton $E_{B\Pintwo^{(6)}}$ can be filtered over the skeleta of $S^2\by \mHP^1$. The cells of the base are concentrated in degrees $0, 2, 4,$ and $6$. We see that column one of the spectral sequence (\ref{eqn:sseq-ebpin26}) corresponds to the filtration over the 0 skeleton, and is given by $\sfK(\mRP^2)$. The map from $\sfK(E_{B\Pintwo^{(6)}})\to \sfK(\mRP^2)$ can be found by inspection of the extensions in the spectral sequence. In particular, this means that the preimage of $\gamma_{\mRP^2}\oby \mC-1$ in $\sfK(E_{B\Pintwo^{(6)}})$ must be an element of order $8$.

  Now, since $a^6_{\mtR}$ corresponds to the Euler class $a^3_{\mtC}$, the composition $p^\ast a^3_{\mtC} = p^\ast w^3 = p^\ast 4w = 4p^\ast w$. However, by the properties outlined in the above paragraph, this is nonzero in the $\sfK(E_{B\Pintwo^{(6)}})$, which contradicts the fact it should be $0$. 
  
  Since we only assumed that the Dehn twist and the identity were smoothly isotopic in $\Diff_\dee(X)$, our assumption must have been faulty. Hence, the two maps are not smoothly isotopic. 
\end{proof}

\begin{corollary*}
  A smooth fiber bundle $\K3\#\K3\into E\downto S^2$ must have $w_2(T^vE)=0$.
\end{corollary*}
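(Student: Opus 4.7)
The plan is to deduce the corollary directly from Theorem \ref{thm:main-thm} via the fibration in Remark \ref{rmk:dehn-twist-in-image-of-LES}. For $X=\K3\#\K3$, the fibration $\Diff_\dee(X^\circ)\to \Diff(X)\to \Fr(X)$ yields the exact segment
\begin{equation*}
    \pi_1(\Diff(X),\uno)\xr{\mathrm{ev}_\ast} \pi_1(\Fr(X),\theta)\cong \mZ/2 \xr{\dee} \pi_0(\Diff_\dee(X^\circ),\uno),
\end{equation*}
in which $\dee$ sends the generator to the boundary Dehn twist $[\delta^\circ]$. Theorem \ref{thm:main-thm} says $[\delta^\circ]\neq 0$, so $\dee$ is injective and by exactness $\mathrm{ev}_\ast = 0$.

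A smooth bundle $X\into E\downto S^2$ is classified (up to based isomorphism) by its clutching loop $[\gamma]\in \pi_1(\Diff(X),\uno)=\pi_2(B\Diff(X),\ast)$. Because $X$ is simply-connected and spin, the Serre spectral sequence with $\mZ/2$-coefficients for $X\to E\to S^2$ produces a short exact sequence $0\to \mZ/2\to H^2(E;\mZ/2)\to H^2(X;\mZ/2)\to 0$ in which the left $\mZ/2$ is pulled back from the base. Since $w_2(T^vE)$ restricts to $w_2(TX)=0$ on each fiber, it is determined by a single $\mZ/2$-valued invariant. Under the identification of $T^vE$ with the bundle associated to $\gamma$ via the derivative $\Diff(X)\to\Aut(TX)$, a spin structure on $T^vE$ is the same as a lift of $\gamma$ along the $2$-fold central extension $\Diff^{\mathrm{spin}}(X)\to\Diff(X)$ determined by the unique spin structure on $X$. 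Such a lift exists iff the frame-evaluation loop $t\mapsto d\gamma(t)|_p(\theta)$ is nullhomotopic in $\Fr(X)$, i.e.\ iff $\mathrm{ev}_\ast[\gamma]=0$ in $\pi_1(\Fr(X))=\mZ/2$. Since $\mathrm{ev}_\ast=0$, this obstruction always vanishes and $w_2(T^vE)=0$.

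For the equivalent statement about the total space, apply the Whitney sum formula to the short exact sequence of real vector bundles
\begin{equation*}
    0\to T^vE\to TE\to \pi^\ast TS^2\to 0.
\end{equation*}
Each fiber is orientable so $w_1(T^vE)=0$, and $S^2$ is spin so $w_1(TS^2)=w_2(TS^2)=0$. Hence $w_2(TE)=w_2(T^vE)=0$, i.e.\ $E$ is spin.

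The main obstacle is the identification in the second paragraph of the single $\mZ/2$-obstruction for $w_2(T^vE)$ with $\mathrm{ev}_\ast[\gamma]$, which amounts to the tautological fact that the central extension $\Spin(4)\to\SO(4)$ is classified by $w_2$, applied fiberwise over $S^2$. Once this identification is in place, the corollary follows formally from Theorem \ref{thm:main-thm} and the long exact sequence of Remark \ref{rmk:dehn-twist-in-image-of-LES}.
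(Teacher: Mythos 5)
Your argument is correct and coincides with the paper's, which simply cites the proof of \cite[Prop.\ 2.1]{KM20} with $\K3$ replaced by $\K3\#\K3$. Both proceed by combining the exact segment $\pi_1(\Diff(X))\xrightarrow{\mathrm{ev}_\ast}\pi_1(\Fr(X))\to\pi_0(\Diff_\dee(X^\circ))$ from Remark~\ref{rmk:dehn-twist-in-image-of-LES} with the identification of $w_2(T^vE)$ with $\mathrm{ev}_\ast$ applied to the clutching loop, so that the nontriviality of the boundary Dehn twist established in Theorem~\ref{thm:main-thm} forces $\mathrm{ev}_\ast=0$.
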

The proof of this corollary follows the proof of \cite[Prop 2.1]{KM20} with $\K3$ replaced by $\K3 \# \K3$.

  \newpage
\bibliographystyle{plain}
\bibliography{problemstatement}

\end{document}